\def\NAT@def@citea{\def\@citea{\NAT@separator}}
\theoremstyle{plain}
\newtheorem{theorem}{Theorem}[section]
\newtheorem{corollary}[theorem]{Corollary}
\newtheorem{proposition}[theorem]{Proposition}
\theoremstyle{definition}
\newtheorem{definition}[theorem]{Definition}
\newtheorem{example}[theorem]{Example}
\theoremstyle{remark}
\newtheorem{remark}{Remark}
\begin{document}


\title{Regularly abstract convex functions with respect to the set of Lipschitz continuous concave functions}

\author{
\name{Valentin~V. Gorokhovik\thanks{Email: gorokh@im.bas-net.by}}\thanks{ID: https://orcid.org/0000-0003-2447-5943}
\affil{Institute of Mathematics,
The National Academy of Sciences of Belarus, \\ Minsk, Belarus}
}

\maketitle

\begin{abstract}
Given a set ${\mathcal{H}}$ of functions defined on a set $X$, à function $f:X \mapsto {\overline{\mathbb{R}}}$ is called abstract ${\mathcal{H}}$-convex if it is the upper envelope of its ${\mathcal{H}}$-minorants, i.e., such its minorants which belong to the set ${\mathcal{H}}$; and $f$ is called regularly abstract ${\mathcal{H}}$-convex if it is the upper envelope of its maximal (with respect to the pointwise ordering) ${\mathcal{H}}$-minorants. In the paper we first present the basic notions of (regular) ${\mathcal{H}}$-convexity for the case when ${\mathcal{H}}$ is an abstract set of functions. For this abstract case a general sufficient condition based on Zorn's lemma for a ${\mathcal{H}}$-convex function to be regularly ${\mathcal{H}}$-convex is formulated.

The goal of the paper is to study the particular class of regularly ${\mathcal{H}}$-convex functions, when ${\mathcal{H}}$ is the set ${\mathcal{L}\widehat{C}}(X,{\mathbb{R}})$ of real-valued Lipschitz continuous classically concave functions defined on a real normed space $X$. For an extended-real-valued function $f:X \mapsto \overline{\mathbb{R}}$ to be ${\mathcal{L}\widehat{C}}$-convex it is necessary and sufficient that $f$ be lower semicontinuous and bounded from below by a Lipschitz continuous function; moreover, each ${\mathcal{L}\widehat{C}}$-convex function is regularly ${\mathcal{L}\widehat{C}}$-convex as well. We focus on ${\mathcal{L}\widehat{C}}$-subdifferentiability of functions at a given point. We prove that the set of points at which an ${\mathcal{L}\widehat{C}}$-convex function is ${\mathcal{L}\widehat{C}}$-subdifferentiable is dense in its effective domain. This result extends the well-known classical Br{\o}ndsted-Rockafellar theorem on the existence of the subdifferential for convex lower semicontinuous functions to the more wide class of lower semicontinuous functions. Using the subset ${\mathcal{L}\widehat{C}}_\theta$ of the set ${\mathcal{L}\widehat{C}}$ consisting of such Lipschitz continuous concave functions that vanish at the origin we introduce the notions of ${\mathcal{L}\widehat{C}}_\theta$-subgradient and ${\mathcal{L}\widehat{C}}_\theta$-subdifferential of a function at a point which generalize the corresponding notions of the classical convex analysis. Some properties and simple calculus rules for ${\mathcal{L}\widehat{C}}_\theta$-subdifferentials as well as ${\mathcal{L}\widehat{C}}_\theta$-subdifferential conditions for global extremum points are established.

Symmetric notions of abstract ${\mathcal{L}\widecheck{C}}$-concavity and ${\mathcal{L}\widecheck{C}}$-superdifferentiability of functions where ${\mathcal{L}\widecheck{C}}:= {\mathcal{L}\widecheck{C}}(X,{\mathbb{R}})$ is the set of Lipschitz continuous convex functions are also considered.

\end{abstract}

\begin{keywords}
Abstract convexity; support minorants;  subdifferentiability; semicontinuous functions; Lipschitz functions; concave Lipschitz functions; global extremum
\end{keywords}

\begin{amscode}
52A01,49J52; 49K27; 26B40
\end{amscode}

\section{Introduction}

The paper relates to the intensively developed branch of modern nonsmooth analysis known as the theory of abstract convexity.
This theory originates in the works of Kutateladze and Rubinov \cite{KutRub72,KutRub}, published back in the early 1970s.
Some years later related ideas were discussed by Balder \cite{Balder} and by Dolecki and Kurcyusz \cite{Dolecki}. The results of a deep study of abstract convexity were summarized in the seminal monograph by Rubinov \cite{Rub2000} as well as in those by Pallaschke and Rolewicz \cite{PalRol} and by Singer \cite{Singer}.

A starting point of abstract convexity is the following result of classical convex analysis: a function $f$ defined on a real normed space is lower semicontinuous and convex if and only if it is the upper envelope of its continuous affine minorants.

The main idea of abstract convexity can be formulated as follows.
Given a set of functions $\mathcal{H}$ defined on an arbitrary set $X$, a function $f:X \mapsto \overline{\mathbb{R}}$ is called abstract convex with respect to the set $\mathcal{H}$ or, shortly, $\mathcal{H}$-convex if it can be presented as the upper envelope of a subset of functions from $\mathcal{H}$. Thus, in abstract convexity affine functions, used as elementary ones in classical convexity, are replaced by functions from a given set $\mathcal{H}$ but the operation of upper envelope is retained as a tool for producing abstract $\mathcal{H}$-convex functions. A family of elementary functions $\mathcal{H}$ can be given explicitly or by means of some another set $H$ and a coupling  function $\gamma$ on $X\times H$ pairing $X$ and $H$.
The class of abstract $\mathcal{H}$-concave functions is defined symmetrically by replacing the upper envelope with the lower envelope.
Different sets $\mathcal{H}$ of elementary functions generate different classes of abstract $\mathcal{H}$-convex functions.
The class of inf-convex functions coinciding with functions that are abstract concave with respect of the set of real-valued lower semicontinuous convex functions was described in \cite{GRC95}. In \cite[Chapters 2 and 3]{Rub2000} and \cite{Dutta1,Dutta2,Dutta3} the authors develop so called Monotonic Analysis which studies some classes of increasing functions, in particular, increasing and positively homogeneous (IPH) functions and increasing and convex-along-rays (ICAR) functions. It was shown that these classes of increasing functions are abstract convex with respect to suitable sets of elementary functions. In \cite{Martinez-Legaz_Singer} the authors have constructed a family $\mathcal{H}$ of extended-real-valued functions on ${\mathbb{R}}^n$ containing affine functions such that
an arbitrary extended-real-valued function (that is not necessarily lower semicontinuous) is convex if and only if it is $\mathcal{H}$-convex. The connection of abstract convex constructions with those of local non-smooth analysis was studied in \cite{Ioffe}.

Like the classical convex analysis the study of abstract convexity is mainly stimulated by numerous applications in optimization. In particular, in the framework of abstract convexity criteria for global minima and maxima  \cite{Rub2000,GorTyk2019-1,GorTyk2019-2,GorTyk2019-2a} as well as the duality results \cite{Balder,Flores-Bazan,Burachik_Rub07,Bedn_Syga2014,Bedn_Syga2018,BBKY} were derived.

The special class of abstract $\mathcal{H}$-convex functions called regularly abstract $\mathcal{H}$-convex ones was introduced in \cite{GorTyk2019-2,GorTyk2019-2a}.

A function $f:X \mapsto {\overline{\mathbb{R}}}$ is called regularly $\mathcal{H}$-convex if it can be presented as the upper envelope of its maximal (with respect to pointwise ordering) $\mathcal{H}$-minorants. Clearly, a regularly $\mathcal{H}$-convex function is $\mathcal{H}$-convex as well, but the converse as it is demonstrated by simple examples (see Example \ref{ex2.1} below) is not true in general. The fact that some abstract $\mathcal{H}$-convex functions can completely be characterized by their maximal $\mathcal{H}$-minorants was noted by Rubinov in \cite[Subsection 8.3]{Rub2000}.

In the present paper we continue to study regularly abstract $\mathcal{H}$-convex functions. In Section 2 we recall the basic notions of (regular) ${\mathcal{H}}$-convexity for the case when ${\mathcal{H}}$ is an abstract set of functions and formulate (Theorem 2.1) a general sufficient condition based on Zorn's lemma for a ${\mathcal{H}}$-convex function to be regularly ${\mathcal{H}}$-convex. The symmetric notions of abstract concave and regularly abstract concave functions with respect to some a set of elementary functions are discussed as well.

From Section 3 and later on we study the particular class of regularly ${\mathcal{H}}$-convex functions, when ${\mathcal{H}}$ is the set ${\mathcal{L}\widehat{C}}(X,{\mathbb{R}})$ of real-valued Lipschitz continuous concave (in classical sence) functions defined on a real normed space $X$. It was proved in \cite{GorTyk2019-2,GorTyk2019-2a}  that an extended-real-valued function $f:X \mapsto \overline{\mathbb{R}}$ is ${\mathcal{L}\widehat{C}}$-convex if and only if $f$ is lower semicontinuous and bounded from below by a Lipschitz continuous function; moreover, each ${\mathcal{L}\widehat{C}}$-convex function is regularly ${\mathcal{L}\widehat{C}}$-convex as well. It follows from this criterion that each lower semicontinuous convex function is ${\mathcal{L}\widehat{C}}$-convex as it is bounded from below by an affine continuous function. Thus the class of ${\mathcal{L}\widehat{C}}$-convex functions is a nontrivial extension of the class of lower semicontinuous convex functions. Further we focus on ${\mathcal{L}\widehat{C}}$-subdifferentiability of functions at a given point. We prove (Theorem \ref{th3.5}) that the set of points at which an ${\mathcal{L}\widehat{C}}$-convex function is ${\mathcal{L}\widehat{C}}$-subdifferentiable is dense in its effective domain. This result extends the well-known classical Br{\o}ndsted-Rockafellar theorem on the existence of the subdifferential for convex lower semicontinuous functions to the more wide class of lower semicontinuous functions.

In Section 4, using the subset ${\mathcal{L}\widehat{C}}_\theta$ of the set ${\mathcal{L}\widehat{C}}$ consisting of such Lipschitz continuous concave functions that vanish at the origin, we introduce the notions of (maximal) ${\mathcal{L}\widehat{C}}_\theta$-subgradient and (thin) ${\mathcal{L}\widehat{C}}_\theta$-subdifferential of a function at a point which generalize the corresponding notions of the classical convex analysis. Some properties of (thin) ${\mathcal{L}\widehat{C}}_\theta$-subdifferentials are presented in Proposition \ref{pr6.3}.

Symmetric notions of abstract ${\mathcal{L}\widecheck{C}}$-concavity and ${\mathcal{L}\widecheck{C}}$-superdifferentiability of functions where ${\mathcal{L}\widecheck{C}}:= {\mathcal{L}\widecheck{C}}(X,{\mathbb{R}})$ is the set of Lipschitz continuous convex functions are discussed in Section 5.

Simple calculus rules for ${\mathcal{L}\widehat{C}}_\theta$-subdifferentials as well as ${\mathcal{L}\widehat{C}}_\theta$-subdifferential conditions for global extremum points are established in Section 6.

In the paper we use mainly standard notations. In particular,
$\mathbb{R}$ is the set of real numbers, $\overline{\mathbb{R}}:={\mathbb{R}}\cup\{+\infty,-\infty\}$ is the set of extended real numbers, ${\mathbb{R}}_{+\infty} := {\mathbb{R}}\cup\{{+\infty}\}$, and ${\mathbb{R}}_{-\infty} :=  {\mathbb{R}}\cup\{{-\infty}\}$. The collection of all functions  $f:X \mapsto Z$ defined on a set $X$ and taking values in a set $Z$ is denoted by $Z^X$; below we consider mainly the cases when $Z$ coincides with one of the following sets: ${\mathbb{R}}$, ${\overline{\mathbb{R}}}, {\mathbb{R}}_{+\infty},$ or ${\mathbb{R}}_{-\infty}$.

\textit{The effective domain} of a function $f \in {\overline{\mathbb{R}}}^X$ is the set ${\rm dom}\,f := \{x \in X \mid |f(x)| < +\infty\}.$ A function $f \in {\overline{\mathbb{R}}}^X$ is called \textit{proper} if ${\rm dom}\,f \ne \varnothing$. Note that along with finite values a proper function can take both the value $+\infty$ and $-\infty$.

The epigraph of a proper function $f \in \overline{\mathbb{R}}^X$is the set
$${\rm epi}\,f :=\{(x,\gamma) \in X \times {\mathbb R} \mid f(x) \le \gamma\}$$
and the hypograph of $f$  is
$${\rm hypo}\,f :=\{(x,\gamma) \in X \times {\mathbb R} \mid f(x) \ge \gamma\}.$$

The collection ${\overline{\mathbb{R}}}^X$ is supposed to be ordered by the pointwise ordering $f \le g \Leftrightarrow f(x) \le g(x)\,\,\forall\,\,x \in X$. Observe, that $$f \le g\,\,\Leftrightarrow\,\,{\rm epi}\,g \subseteq {\rm epi}\,f\,\,\Leftrightarrow\,\,{\rm hypo}\,f \subseteq {\rm hypo}\,g.$$

\section{Preliminaries on ${\mathcal{H}}$-convex and regularly ${\mathcal{H}}$-convex functions}

In this section, for the convenience of readers, we recall basic definitions
 of abstract convexity and regularly abstract convexity, for more details we refer to the monographs \cite{Singer,PalRol,Rub2000} and to the papers \cite{GorTyk2019-2,GorTyk2019-2a}.

Throughout this section unless other specified $X$ is an abstract set, that is not equipped with any topological, algebraic or other structures; elements of $X$ will be called points.

Let us introduce and fix a set ${\mathcal{H}}:={\mathcal{H}}(X,{{\mathbb{R}_{- \infty}}})$ of proper extended-real-valued functions $h:X \mapsto {\mathbb{R}}_{-\infty} $ defined on $X$ and whose values are such that $h(x) < +\infty$ for all $x \in X$.

Given a proper function $f \in {\overline{\mathbb{R}}}^X$, the set $S^-({\mathcal{H}},f):=\{h \in \mathcal{H} \mid h \le f\}$ is called
\textit{the lower $\mathcal{H}$-support set} of $f$, while a functions $h$ from $S^-({\mathcal{H}},f)$ are called \textit{$\mathcal{H}$-minorants} of $f.$

A function $f:X \mapsto \overline{\mathbb{R}}$ is said to be \textit{abstract $\mathcal{H}$-convex} (or, shortly, \textit{$\mathcal{H}$-convex}), if $S^-({\mathcal{H}},f) \ne \varnothing$ and
\begin{equation}\label{e1.1}
f(x) = \sup\{h(x) \mid h \in S^-({\mathcal{H}},f)\}\,\,\text{for all}\,\,x \in X
\end{equation}
or, equivalently, if there is a nonempty subset $\mathcal{H'} \subset {\mathcal{H}}$ such that
\begin{equation}\label{e1.2}
f(x) = \sup\{h(x) \mid h \in \mathcal{H'}\}\,\,\text{for all}\,\,x \in X.
\end{equation}

It is easy to see that a subset $\mathcal{H'} \subset \mathcal{H}$ satisfying the equality \eqref{e1.2} belongs to $S^-({\mathcal{H}},f)$.

Evidently, that each function $h \in {\mathcal{H}}$ is abstract ${\mathcal{H}}$-convex because it satisfies \eqref{e1.2} with ${\mathcal{H'}} =\{h\}$. Through this we will refer to the set ${\mathcal{H}}$ as the set of elementary abstract convex functions. By analogy with the classical convex analysis the functions that belong ${\mathcal{H}}$ frequently call abstract affine functions.

By $S^{-}_{\text{max}}({\mathcal{H}},f)$ we denote the set of \textit{maximal $($with respect to the pointwise ordering$)$ $\mathcal{H}$-minorants of a function $f$}, i.e., the set of such functions $\bar{h} \in S^-({\mathcal{H}},f)$ that $$h \in S^-({\mathcal{H}},f),\,\bar{h} \le h \,\,\Longrightarrow h = \bar{h}.$$

Following \cite{GorTyk2019-2,GorTyk2019-2a} we call a function $f:X \mapsto \overline{\mathbb{R}}$ \textit{regularly  $\mathcal{H}$-convex} if $S^{-}_{\text{max}}({\mathcal{H}},f) \ne \varnothing$ and
\begin{equation}\label{e1.6}
f(x) = \sup\{h(x) \mid h \in {S^{-}_{\text{max}}({\mathcal{H}},f)}\}\,\,\text{for all}\,\,x \in X.
\end{equation}

The next example \cite{GorTyk2019-2,GorTyk2019-2a} shows that an ${\mathcal{H}}$-convex function may not be regularly ${\mathcal{H}}$-convex.
\begin{example}\cite{GorTyk2019-2,GorTyk2019-2a}\label{ex2.1}
Let $X = {\mathbb{R}}$  and let ${\mathcal{H}}$ be the set of linear functions with rational slopes, i.~e. ${\mathcal{H}} = \{x \mapsto qx \mid q \in {\mathbb{Q}}\}$, where ${\mathbb{Q}}$ is the set of rational numbers. Consider the function  $f:{\mathbb{R}} \mapsto {\mathbb{R}}$ such that  $f(x)=\sqrt{2}\,|x|$ for all $x \in {\mathbb{R}}.$ Then $S^-({\mathcal{H}},f) = \{x \mapsto qx \mid q \in {\mathbb{Q}},-\sqrt{2} < q < \sqrt{2}\}$. Since $f(x) = \sup\{qx \mid q \in {\mathbb{Q}},-\sqrt{2} < q < \sqrt{2}\}$ for all $x \in \mathbb{R}$, the function $f$ is ${\mathcal{H}}$-convex. At the same time, $S^-_{\text{max}}({\mathcal{H}},f) = \varnothing$ and, hence, the function $f$ is not regularly ${\mathcal{H}}$-convex.
\end{example}

\begin{remark}\label{rem2.1}
The fact that for some functions $f$ the lower support set $S^{-}({\mathcal{H}},f)$ can be completely described by the set $S^{-}_{\text{max}}({\mathcal{H}},f)\}$ of maximal elements was  noted by Rubinov in \cite[Subsection 8.3]{Rub2000}.
\end{remark}

From \eqref{e1.1} and \eqref{e1.6} we conclude that for an $\mathcal{H}$-convex function $f$ to be regularly $\mathcal{H}$-convex it is sufficient that $S^{-}_{\text{max}}({\mathcal{H}},f)$ satisfy the following property: for any ${\mathcal H}$-minorant $h \in S^-({\mathcal{H}},f)$ there exists a maximal ${\mathcal H}$-minorant $\bar{h} \in {S^{-}_{\text{max}}({\mathcal{H}},f)}$ such that $h \le \bar{h}.$
\vspace{3pt}

A subset ${\mathcal{C}} \subset {\mathcal{H}}$ is called \textit{a chain} if for any $h_1,\,h_2 \in {\mathcal{C}}$ either $h_1 \le h_2$ or $h_2 \le h_1$ holds.

\begin{theorem}\label{th2.1}
An $\mathcal{H}$-convex function $f:X \mapsto {\mathbb{R}}$ is regularly $\mathcal{H}$-convex if for any chain $\mathcal{C} \subset S^-({\mathcal{H}},f)$  there is $\bar{h} \in S^-({\mathcal{H}},f)$ such that $h \le \bar{h}$ for all $h \in {\mathcal{C}}.$ Moreover, in this case
for any $h \in S^-({\mathcal{H}},f)$ there exists $\bar{h} \in S^-_{\text{max}}({\mathcal{H}},f)$ such that $h \le \bar{h}.$
\end{theorem}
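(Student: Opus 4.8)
The plan is to apply Zorn's lemma to the partially ordered set $S^-(\mathcal{H},f)$, ordered by the pointwise ordering inherited from $\overline{\mathbb{R}}^X$. The hypothesis of the theorem is precisely the statement that every chain in $S^-(\mathcal{H},f)$ has an upper bound in $S^-(\mathcal{H},f)$, so Zorn's lemma applies directly. First I would fix an arbitrary $h \in S^-(\mathcal{H},f)$ (such $h$ exists because $f$ is $\mathcal{H}$-convex, hence $S^-(\mathcal{H},f) \ne \varnothing$) and consider the subset $S^-_h := \{g \in S^-(\mathcal{H},f) \mid h \le g\}$. This set is nonempty (it contains $h$) and is again partially ordered by pointwise ordering. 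A chain $\mathcal{C}$ in $S^-_h$ is in particular a chain in $S^-(\mathcal{H},f)$, so by hypothesis it has an upper bound $\bar{h} \in S^-(\mathcal{H},f)$; since every element of $\mathcal{C}$ dominates $h$ and $\mathcal{C}$ is nonempty (the empty chain is handled by taking $h$ itself as the bound), we get $h \le \bar{h}$, so $\bar{h} \in S^-_h$. Thus every chain in $S^-_h$ has an upper bound in $S^-_h$, and Zorn's lemma yields a maximal element $\bar{h}$ of $S^-_h$.

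Next I would check that such a maximal element $\bar{h}$ of $S^-_h$ is in fact maximal in $S^-(\mathcal{H},f)$, i.e. $\bar{h} \in S^-_{\text{max}}(\mathcal{H},f)$. Indeed, if $g \in S^-(\mathcal{H},f)$ with $\bar{h} \le g$, then $h \le \bar{h} \le g$, so $g \in S^-_h$; maximality of $\bar{h}$ in $S^-_h$ forces $g = \bar{h}$. This establishes the ``Moreover'' clause: for every $h \in S^-(\mathcal{H},f)$ there is $\bar{h} \in S^-_{\text{max}}(\mathcal{H},f)$ with $h \le \bar{h}$. In particular $S^-_{\text{max}}(\mathcal{H},f) \ne \varnothing$.

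It remains to derive \eqref{e1.6} from \eqref{e1.1} using this domination property, which is exactly the sufficient condition recorded in the paragraph after Remark \ref{rem2.1}. Fix $x \in X$. On the one hand, since $S^-_{\text{max}}(\mathcal{H},f) \subseteq S^-(\mathcal{H},f)$, we have $\sup\{\bar{h}(x) \mid \bar{h} \in S^-_{\text{max}}(\mathcal{H},f)\} \le \sup\{h(x) \mid h \in S^-(\mathcal{H},f)\} = f(x)$ by \eqref{e1.1}. On the other hand, for every $h \in S^-(\mathcal{H},f)$ there is $\bar{h} \in S^-_{\text{max}}(\mathcal{H},f)$ with $h \le \bar{h}$, hence $h(x) \le \bar{h}(x) \le \sup\{\bar{h}(x) \mid \bar{h} \in S^-_{\text{max}}(\mathcal{H},f)\}$; taking the supremum over $h \in S^-(\mathcal{H},f)$ and invoking \eqref{e1.1} again gives $f(x) \le \sup\{\bar{h}(x) \mid \bar{h} \in S^-_{\text{max}}(\mathcal{H},f)\}$. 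Combining the two inequalities yields \eqref{e1.6}, so $f$ is regularly $\mathcal{H}$-convex.

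I do not expect any serious obstacle here: the proof is a textbook application of Zorn's lemma plus the observation that maximality within the ``cone above $h$'' coincides with maximality in the whole support set. The only point requiring a little care is the bookkeeping with the empty chain and with the nonemptiness of $S^-(\mathcal{H},f)$ (guaranteed by $\mathcal{H}$-convexity), and making sure the upper bound supplied by the hypothesis genuinely lies above the fixed $h$ — which it does as soon as the chain is nonempty, and is trivial otherwise.
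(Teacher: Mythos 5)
Your proof is correct and follows essentially the same route as the paper, which simply invokes Zorn's lemma together with the sufficient condition recorded after Remark \ref{rem2.1}; you have merely written out in full the details (restriction to the sub-poset $S^-_h$, handling of the empty chain, and the passage from the domination property to \eqref{e1.6}) that the paper leaves to the reader.
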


The proof follows from Zorn's lemma and the preceding remark.

\smallskip

Theorem \ref{th2.1} is conceptually related to Proposition 8.6 from \cite{Rub2000}.

\smallskip

An $\mathcal{H}$-minorant $h \in S^-({\mathcal{H}},f)$ of a function $f$ is said \textit{to support $f$ from below at a point} $\bar{x} \in {\rm dom}\,f$, if $h(\bar{x})=f(\bar{x}).$ The set of all $\mathcal{H}$-minorants of a function $f$ supporting $f$ from below at a point $\bar{x} \in {\rm dom}\,f$ is denoted by $S^-({\mathcal{H}},f,\bar{x})$.
For $\bar{x} \notin {\rm dom}\,f$ we define $S^-({\mathcal{H}},f,\bar{x}) = \varnothing$. Note, that for some $\bar{x} \in {\rm dom}\,f$ the set $S^-({\mathcal{H}},f,\bar{x})$ can be empty as well (see Example \ref{ex3.3}).

A function $f:X \mapsto {\mathbb{R}}$ is said \textit{to be ${\mathcal{H}}$-subdifferentiable at a point $\bar{x} \in {\rm dom}\,f$}, if  $S^-({\mathcal{H}},f,\bar{x}) \ne \varnothing$.

\smallskip

The set $S^-({\mathcal{H}},f,\bar{x})$
is nonempty if and only if
$$
f(\bar{x}) = \max\{h(\bar{x}) \mid h \in S^-({\mathcal{H}},f)\},
$$
with the maximum in the right-hand side of the latter equality being attained just at such functions $h$ from
$S^-({\mathcal{H}},f)$ that belong to $  S^-({\mathcal{H}},f,\bar{x})$.

The set of maximal $\mathcal{H}$-minorants of a function $f$ that support $f$ from below at a point $\bar{x} \in {\rm dom}\,f$ we denote by ${S^{-}_{\text{max}}({\mathcal{H}},f,\bar{x})}.$

\smallskip

The connections of the sets $S^{-}({\mathcal{H}},f,\bar{x})\}$ and $S^{-}_{\text{max}}({\mathcal{H}},f,\bar{x})\}$ with the abstract subdifferential of $f$ at $\bar{x}$ were discussed in \cite[Propositions 7.1 and 8.4]{Rub2000} (see, also, \cite{Bur_Rub08}).

\smallskip
In a similar way abstract concave functions are introduced. Here the basic definitions will be provided shortly.

Abstract concave functions are defined with respect to some set
${\mathcal{G}}:={\mathcal{G}}(X,{\mathbb{R}}_{+
\infty})$ of proper extended-real-valued functions defined on $X$ and whose values are such that $f(x) > -\infty$ for all $x \in X$.

The set $S^+({\mathcal{G}},f):=\{g \in \mathcal{G} \mid g \ge f\}$
is called \textit{upper} \textit{$\mathcal{G}$-support set} of a function $f \in {\overline{\mathbb{R}}}^X$, and functions $g$ from $S^+({\mathcal{G}},f)$ is called \textit{$\mathcal{G}$-majorants} of $f.$

A function $f: X \mapsto \overline{\mathbb{R}}$ is called (\textit{abstract}) \textit{$\mathcal{G}$-concave} if $S^+({\mathcal{G}},f) \ne \varnothing$ and
\begin{equation}\label{e1.7}
f(x) = \inf\{g(x) \mid g \in S^+({\mathcal{G}},f)\}\,\,\text{for all}\,\,x \in X.
\end{equation}

We refer to the set $\mathcal{G}$ as the set of elementary abstract concave functions.

\vspace{3pt}

By symbol $S^{+}_{\text{min}}({\mathcal{G}},f)$ we denote the set of minimal (with respect to the pointwise ordering) ${\mathcal{G}}$-majorants of the function $f$, i.e., the set of such functions $\bar{g} \in S^+({\mathcal{G}},f)$, that $g \in S^+({\mathcal{G}},f),\,\,\bar{g} \ge g\,\,\Longrightarrow g = \bar{g}.$

A function $f:X \mapsto \overline{\mathbb{R}}$ is called \textit{regularly $\mathcal{G}$-concave}, if $S^{+}_{\text{min}}({\mathcal{G}},f) \ne \varnothing$ and
$$
f(x) = \inf\{g(x) \mid g \in {S^{+}_{\text{min}}({\mathcal{G}},f)}\}\,\,\text{for all}\,\,x \in X.
$$

It follows from \eqref{e1.7} that for a $\mathcal{G}$-concave function $f$ to be regularly $\mathcal{G}$-concave, it is sufficient that  $S^{+}_{\text{min}}({\mathcal{G}},f) \ne \varnothing$  and for any function $g \in S^+({\mathcal{G}},f)$ there exist $\bar{g} \in {S^{+}_{\text{min}}({\mathcal{G}},f)}$ such that $g \ge \bar{g}.$

A $\mathcal{G}$-majorant $g \in S^+({\mathcal{G}},f)$ of a function $f$ is said \textit{to support} $f$ \textit{from above at a point} $\bar{x} \in {\rm dom}\,f$, if $g(\bar{x})=f(\bar{x}).$ The set of all $\mathcal{G}$-majorants of a function $f$, supporting $f$ from above at a point $\bar{x} \in {\rm dom}\,f$, is denoted by $S^+({\mathcal{G}},f,\bar{x})$.

A function $f:X \mapsto {\mathbb{R}}$ is said \textit{to be ${\mathcal{G}}$-superdifferentiable at a point $\bar{x} \in {\rm dom}\,f$}, if  $S^+({\mathcal{G}},f,\bar{x}) \ne \varnothing$.

The set $S^+({\mathcal{G}},f,\bar{x})$ is nonempty if and only if
$$
f(\bar{x}) = \min\{g(\bar{x}) \mid g \in S^+({\mathcal{G}},f)\}.
$$
The minimum in the right-hand side of the latter equality is attained just at such functions $g$ from $S^+({\mathcal{G}},f)$ that belong to $S^+({\mathcal{G}},f,\bar{x})$.

The set of minimal $\mathcal{G}$-majorants of a function  $f$ that support $f$ from above  at a point $\bar{x} \in {\rm dom}\,f$ will be denoted by ${S^{+}_{\text{min}}({\mathcal{G}},f,\bar{x})}.$

Using the sets $S^-_{\text{max}}({\mathcal{H}},f,\bar{x})$ of maximal ${\mathcal{H}}$-minorants and $S^{+}_{\text{min}}({\mathcal{G}},f,\bar{x})$ of minimal ${\mathcal{G}}$-majorants supporting $f$ at a point $\bar{x},$ one can derive a sufficient condition as well as a necessary one for points of global minimum and maximum of the function $f$ (see \cite[Theorems 1 and 2]{GorTyk2019-2a}).

\begin{remark}\label{r1}
It is easily seen from the definitions that when the sets ${\mathcal{G}}:={\mathcal{G}}(X,{\mathbb{R}}_{+
\infty})$ and ${\mathcal{H}}:={\mathcal{H}}(X,{\mathbb{R}}_{-
\infty})$ are symmetric, in the sense that $\mathcal{G} = -\mathcal{H}$, the notion of $\mathcal{G}$-concavity is symmetric to that of $\mathcal{H}$-convexity, i.e., a function $f$ is ${\mathcal{G}}$-concave if and only if $-f$ is ${\mathcal{H}}$-convex. When, in addition, ${\mathcal{H}} = - {\mathcal{H}}$, in particular, when ${\mathcal{H}}$ is a vector space in ${\mathbb{R}}^X$, a function $f$ is ${\mathcal{H}}$-concave if and only if $-f$ is ${\mathcal{H}}$-convex.
\end{remark}

The next example shows how the abstract concepts of ${\mathcal{H}}$-convexity and regular ${\mathcal{H}}$-convexity relate to the concepts of classical convexity.

\begin{example}\cite[Section 1]{GorTyk2019-2a}.\label{ex2.2}
Let $X$ be a real locally convex topological vector space, and let  ${\mathcal{H}}=~{\mathcal{A}}(X,\mathbb{R})$ be the vector space of continuous affine functions.

The following statements are equivalent
\begin{itemize}
\item[(i)] a function $f:X \mapsto {{\mathbb{R}}_{+\infty}}$ is lower (upper) semicontinuous on $X$ and convex (concave);
\item[(ii)] a function $f:X \mapsto {{\mathbb{R}}_{+\infty}}$ is ${\mathcal{A}}(X,\mathbb{R})$-convex (${\mathcal{A}}(X,\mathbb{R})$-concave); 
\item[(iii)] a function $f:X \mapsto {{\mathbb{R}}_{+\infty}}$ is regularly ${\mathcal{A}}(X,\mathbb{R})$-convex (regularly ${\mathcal{A}}(X,\mathbb{R})$-concave).
    \end{itemize}
For the proof of these equivalencies we refer to \cite[Section 1]{GorTyk2019-2a}.

\end{example}

\begin{example}\label{ex2.3}
Let $X$ be a real vector space, ${\widecheck{C}} := {\widecheck{C}}(X,\,{\mathbb{R}}_{+ \infty})$ the set of all convex functions $g:X \mapsto {\mathbb{R}}_{+ \infty}$ defined on $X$, and ${\widehat{C}} := {\widehat{C}}(X,\,{\mathbb{R}}_{- \infty})$ the set of all concave functions $h:X \mapsto {\mathbb{R}}_{- \infty}$ defined on $X$.

It follows from \cite[Theorem 3.2]{Gor2019} that any extended-real-valued function $f:X \to {\overline{\mathbb R}}$ is regularly $\widehat{C}$-convex and for each concave minorant $h \in S^-(\widehat{C},f)$ there exists a maximal concave minorant $\bar{h} \in S^-_{\text{max}}(\widehat{C},f)$ such that $h \le \bar{h}$. Moreover, $S^-_{\text{max}}(\widehat{C},f,\bar{x})$ is nonempty for all $\bar{x} \in {\rm dom}f$ and, consequently, any function $f:X \to {\overline{\mathbb R}}$ is $\widehat{C}$-subdifferentiable at every points $\bar{x} \in {\rm dom}f$.

Symmetrically, any extended-real-valued function $f:X \to {\overline{\mathbb R}}$ is regularly $\widecheck{C}$-concave and $\widecheck{C}$-superdifferentiable at every points $\bar{x} \in {\rm dom}f$.

\end{example}

\section{Regularly abstract convexity and subdifferentiability of functions with respect to Lipschitz continuous concave functions}

In this and next sections $X$ is a real normed vector space.

Recall (see, for instance, \cite{Martin}) that a function $h:X \mapsto {\overline{\mathbb{R}}}$ defined on a normed vector space $X$ is called \textit{globally Lipschitz continuous} or \textit{Lipschitz continuous on the whole space $X$} if there exists a real $k \ge 0$, called \textit{a Lipschitz constant}, such that
\begin{equation}\label{e3.1}
|h(x) - h(y)| \le k\|x - y\|\,\,\forall\,\,x,y \in X.
\end{equation}
Observe that the condition \eqref{e3.1} is equivalent to the inequality
\begin{equation}\label{e3.2}
    h(y) - k\|x - y\| \le h(x)\,\,\forall\,\,x,y \in X
\end{equation}
To see this one needs to swap places $x$ and $y$ in \eqref{e3.2}.

It follows immediately from the definition that any globally Lipschitz continuous function $h$ is either real-valued on $X$, i.e., such that ${\rm dom}\,h =X$, or $h \equiv \pm \infty.$

In what follows,  we exclude from consideration functions identically equal to infinity and consider only such globally Lipschitz functions which are real-valued.

If there is no risk of confusion, instead of saying `a function $h$ is globally Lipschitz continuous on X with Lipschitz constant $k \ge 0$' or `$h$ is globally Lipschitz  continuous on X', we will say simply `$h$ is $k$-Lipschitz continuous' or `$h$ is Lipschitz continuous'.

Let ${{\mathcal L}\widehat{C}}:={{\mathcal L}\widehat{C}}(X,{\mathbb{R}})$ be the set of real-valued Lipschitz continuous concave functions defined on $X$. In this section we focus on abstract convexity and subdifferentiability of functions with respect to the set ${{\mathcal L}\widehat{C}}:={{\mathcal L}\widehat{C}}(X,{\mathbb{R}})$.

 Let $f: X \mapsto \overline{\mathbb R}$ be an extended-real-valued function.

 Following the terminology of the general theory of abstract ${\mathcal H}$-convexity we call the set $S^-({{\mathcal L}\widehat{C}},f):=\{h \in {{\mathcal L}\widehat{C}} \mid h\le f\}$  \textit{the lower ${{\mathcal L}\widehat{C}}$-support of $f$}; and functions $h$ belonging to $S^-({{\mathcal L}\widehat{C}},f)$ are called \textit{${{\mathcal L}\widehat{C}}$-minorants of $f$}.

A function $f: X \mapsto \overline{\mathbb R}$ is called \textit{${{\mathcal L}\widehat{C}}$-convex} if $S^-({{\mathcal L}\widehat{C}},f) \ne \varnothing$ and $$f(x) =\sup\{h(x) \mid h \in S^-({{\mathcal L}\widehat{C}},f)\}.$$

Since $S^-({{\mathcal L}\widehat{C}},f)$ consists of real-valued functions, the condition $S^-({{\mathcal L}\widehat{C}},f) \ne \varnothing$ implies that $f(x) > -\infty$ for all $x \in X$.

\begin{theorem} {\rm \cite{GorTyk2019-2,GorTyk2019-2a}.}\label{th3.1}
 Any Lipschitz continuous function $f: X \mapsto {\mathbb{R}}$ is abstract ${{\mathcal L}\widehat{C}}$-convex.
\end{theorem}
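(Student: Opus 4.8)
The plan is to produce, for an arbitrary point $y \in X$ and an arbitrary real number $\gamma < f(y)$, an $\mathcal{L}\widehat{C}$-minorant $h$ of $f$ with $h(y) > \gamma$; this immediately yields both that $S^-(\mathcal{L}\widehat{C},f) \ne \varnothing$ and that the pointwise supremum of the minorants recovers $f$. The natural candidate is built directly from the Lipschitz estimate \eqref{e3.2}: if $f$ is $k$-Lipschitz on $X$, then for every fixed $y$ the function
\begin{equation}\label{eq:cand}
h_y(x) := f(y) - k\|x - y\|, \qquad x \in X,
\end{equation}
satisfies $h_y \le f$ by \eqref{e3.2}, is itself $k$-Lipschitz continuous (as $\|x - y\|$ is $1$-Lipschitz in $x$), is concave in $x$ because $x \mapsto \|x - y\|$ is convex and we subtract it, and is real-valued. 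Hence $h_y \in S^-(\mathcal{L}\widehat{C},f)$, so the lower $\mathcal{L}\widehat{C}$-support set is nonempty. Moreover $h_y(y) = f(y)$, so in fact each $h_y$ supports $f$ from below at $y$, which is even stronger than what we need.

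With this family in hand the abstract convexity identity is essentially automatic: for every $x \in X$ we have $\sup_{y \in X} h_y(x) \ge h_x(x) = f(x)$, while the reverse inequality $\sup_{y} h_y(x) \le f(x)$ holds because every $h_y$ is a minorant of $f$. Therefore
\[
f(x) = \sup\{h(x) \mid h \in S^-(\mathcal{L}\widehat{C},f)\} \quad \text{for all } x \in X,
\]
which is exactly the definition of $\mathcal{L}\widehat{C}$-convexity. I would organize the write-up as three short verifications — (a) $h_y$ is Lipschitz and concave; (b) $h_y \le f$ via \eqref{e3.2}; (c) the supremum formula — followed by the one-line conclusion.

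There is no serious obstacle here; the only point demanding a word of care is the justification that \eqref{e3.2} applies, i.e. that the equivalence between the two-sided Lipschitz condition \eqref{e3.1} and the one-sided inequality \eqref{e3.2} (already noted in the excerpt, obtained by swapping $x$ and $y$) is being used in the right direction, with the roles of the variables matching those in \eqref{eq:cand}. Everything else — concavity of a norm, Lipschitz continuity of $x \mapsto \|x-y\|$ with constant $1$, and the elementary fact that a pointwise supremum of minorants bounded above by $f$ and attaining $f$ at each point equals $f$ — is routine. One could remark in passing that this argument in fact shows something more precise, namely that a $k$-Lipschitz function is $\mathcal{L}\widehat{C}$-subdifferentiable at \emph{every} point of $X$, with the concrete functions \eqref{eq:cand} as support minorants; this observation will be convenient later.
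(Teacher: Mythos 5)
Your proposal is correct and follows essentially the same route as the paper: both use the family $h_y(x)=f(y)-k\|x-y\|$, note via \eqref{e3.2} that each $h_y$ is a concave Lipschitz minorant of $f$ with $h_y(y)=f(y)$, and conclude that the supremum over $y$ recovers $f$. Your additional remark about $\mathcal{L}\widehat{C}$-subdifferentiability at every point is consistent with what the paper later records as Corollary \ref{cor5.1}.
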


\begin{proof}
Let $f: X \mapsto {\mathbb{R}}$ be a $k$-Lipschitz continuous function. Then it follows from the inequality \eqref{e3.2} that $f(x) = \max\{f(y) - k\|x-y\| \mid y \in X\}$. Since for every $y \in X$  the function $h_y(x):= f(y) - k\|x-y\|$ belongs to $S^-({{\mathcal L}\widehat{C}},f)$, the latter equality shows that the function $f$ is ${{\mathcal L}\widehat{C}}$-convex.
\end{proof}

\begin{remark}
 Another proof of the above assertion was given in \cite{GorTyk2019-2,GorTyk2019-2a}. It was based on the following criterion \cite[Theorem 4.1]{Gor2019}: a real-valued function $f:X \to {\mathbb R}$ is $k$-Lipschitz continuous if and only if each maximal concave minorant $($equivalently, each minimal convex majorant$)$ of $f$ also is Lipschitz continuous on $X$ with a Lipschitz constant not exceeding $k.$
\end{remark}

The next theorem shows that any Lipschitz continuous function is, actually, regularly abstract ${{\mathcal L}\widehat{C}}$-convex and, moreover, the class of regularly ${{\mathcal L}\widehat{C}}$-convex functions is essentially wider than the space of Lipschitz continuous functions.

\begin{theorem} {\rm \cite[Theorem 9]{GorTyk2019-2a}}. \label{th4.3}
For an arbitrary function $f: X \mapsto \overline{\mathbb{R}}$ the following three statements are equivalent:
\begin{itemize}
\item[$(i)$]
$f$ is ${\mathcal{L\widehat{C}}}$-convex;

\item[$(ii)$]
$f$ is regularly ${\mathcal{L\widehat{C}}}$-convex;

\item[$(iii)$]
$f$ is lower semicontinuous and bounded from below by a Lipschitz continuous function.
\end{itemize}
\end{theorem}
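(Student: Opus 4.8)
My plan is to establish the cycle $(ii)\Rightarrow(i)\Rightarrow(iii)\Rightarrow(ii)$. The implication $(ii)\Rightarrow(i)$ is immediate from the definitions, since a maximal ${\mathcal{L\widehat{C}}}$-minorant is in particular an ${\mathcal{L\widehat{C}}}$-minorant, so $S^-_{\text{max}}({\mathcal{L\widehat{C}}},f)\subseteq S^-({\mathcal{L\widehat{C}}},f)$ and \eqref{e1.6} forces \eqref{e1.1}. For $(i)\Rightarrow(iii)$: if $f$ is the upper envelope of $S^-({\mathcal{L\widehat{C}}},f)$, then $f$ is lower semicontinuous as a supremum of continuous (indeed Lipschitz) functions, while any member of the nonempty set $S^-({\mathcal{L\widehat{C}}},f)$ is a Lipschitz continuous minorant of $f$.

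The work is in $(iii)\Rightarrow(ii)$, and I would first record the weaker $(iii)\Rightarrow(i)$ by extending the argument of Theorem \ref{th3.1}. Fix a $k_0$-Lipschitz minorant $h_0$ of $f$, a point $\bar x\in X$ (so $f(\bar x)>-\infty$), and $\alpha<f(\bar x)$; choose a finite $\alpha'\in(\alpha,f(\bar x))$. By lower semicontinuity there is $\delta>0$ with $f(x)>\alpha'$ whenever $\|x-\bar x\|<\delta$; choosing $k\ge k_0$ so large that $\alpha'-h_0(\bar x)\le(k-k_0)\delta$, a short case check ($\|x-\bar x\|<\delta$ versus $\|x-\bar x\|\ge\delta$, where $f\ge h_0$ is used) shows that $h(x):=\alpha'-k\|x-\bar x\|$ lies in $S^-({\mathcal{L\widehat{C}}},f)$ with $h(\bar x)=\alpha'>\alpha$. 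Hence $f$ equals the upper envelope of the nonempty set $S^-({\mathcal{L\widehat{C}}},f)$, i.e.\ $f$ is ${\mathcal{L\widehat{C}}}$-convex.

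The new ingredient needed for the \emph{regular} conclusion is the following elementary lemma: a real-valued concave function $\bar h$ on a normed space that is bounded from below by a $c$-Lipschitz function $g$ is itself $c$-Lipschitz. I would prove it directly: for $x\ne y$ put $d=\|y-x\|$ and $v=(y-x)/d$; if the chord slope $r:=(\bar h(y)-\bar h(x))/d$ were $>c$, then monotonicity of the chord slopes of the concave function $t\mapsto\bar h(x+tv)$ gives $\bar h(x-\tau v)\le\bar h(x)-r\tau$ for all $\tau>0$, whereas $\bar h(x-\tau v)\ge g(x-\tau v)\ge g(x)-c\tau$; subtracting yields $(r-c)\tau\le\bar h(x)-g(x)$ for every $\tau>0$, which is absurd. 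Thus $\bar h(y)-\bar h(x)\le c\|y-x\|$, and exchanging $x$ and $y$ gives the Lipschitz estimate.

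Now $(iii)\Rightarrow(ii)$: let $h\in S^-({\mathcal{L\widehat{C}}},f)$ be arbitrary. Since ${\mathcal{L\widehat{C}}}\subset\widehat{C}$, Example \ref{ex2.3} applied to $f$ furnishes a maximal concave minorant $\bar h\in S^-_{\text{max}}(\widehat{C},f)$ with $h\le\bar h\le f$; it is real-valued ($\bar h\ge h>-\infty$, and $\bar h<+\infty$ because $\bar h\in\widehat{C}$) and bounded below by the Lipschitz function $h$, so by the lemma it is Lipschitz, and therefore $\bar h\in S^-_{\text{max}}({\mathcal{L\widehat{C}}},f)$ (maximality among all concave minorants forces maximality among the Lipschitz ones). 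Hence every ${\mathcal{L\widehat{C}}}$-minorant of $f$ lies below some maximal one, so $f=\sup S^-({\mathcal{L\widehat{C}}},f)\le\sup S^-_{\text{max}}({\mathcal{L\widehat{C}}},f)\le f$ with $S^-_{\text{max}}({\mathcal{L\widehat{C}}},f)\ne\varnothing$; that is, $f$ is regularly ${\mathcal{L\widehat{C}}}$-convex. The real content is precisely this interplay of the lemma with the ``upgrading'' of Example \ref{ex2.3}, which turns ``$f$ is the envelope of its Lipschitz concave minorants'' into ``$f$ is the envelope of its \emph{maximal} Lipschitz concave minorants'' and bypasses any Zorn/chain analysis (as in Theorem \ref{th2.1}, where one would have to control chains with unbounded Lipschitz constants). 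The one genuinely degenerate case is $f\equiv+\infty$, for which $S^-_{\text{max}}({\mathcal{L\widehat{C}}},f)=\varnothing$; I would dispose of it separately or read the statement as concerning proper $f$, in line with the conventions fixed in the introduction.
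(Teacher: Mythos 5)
Your argument is correct, but it is worth noting that the paper itself does not prove Theorem \ref{th4.3}: it cites \cite[Theorem 9]{GorTyk2019-2a} and only sketches the two ingredients, namely that boundedness from below by a Lipschitz function is equivalent to $S^-_{\rm max}({\mathcal{L}\widehat{C}},f)\ne\varnothing$ (via \cite[Theorem 4.5]{Gor2019}) and that lower semicontinuity yields the envelope representation. What you have done is reconstruct that outsourced proof in a self-contained way, and the ingredients match: your elementary lemma (a real-valued concave function minorized by a $c$-Lipschitz function is itself $c$-Lipschitz) is exactly the role played by \cite[Lemma 4.4]{Gor2019}, which the present paper invokes in the proof of Proposition \ref{pr5.1}, and your use of Example \ref{ex2.3} to lift an arbitrary ${\mathcal{L}\widehat{C}}$-minorant to a maximal concave minorant, then upgrade it to a maximal \emph{Lipschitz} concave minorant, is precisely how the cited references obtain $(iii)\Rightarrow(ii)$ without running Zorn's lemma over chains with unbounded Lipschitz constants. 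Your explicit cone construction $h(x)=\alpha'-k\|x-\bar{x}\|$ for the lower-semicontinuity step is a clean extension of the device in Theorem \ref{th3.1} and fills in the part the paper merely asserts. Your caveat about $f\equiv+\infty$ is also well taken: such $f$ satisfies $(i)$ and $(iii)$ under the paper's definitions but has no maximal ${\mathcal{L}\widehat{C}}$-minorants (one can always add a constant), so the equivalence must indeed be read under the properness convention fixed in the introduction, and flagging this is more careful than the text itself. In short: correct, complete modulo the stated results of Example \ref{ex2.3}, and more self-contained than the paper's citation-based treatment.
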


For the proof of this theorem we refer to \cite[Theorem 9]{GorTyk2019-2a}. Here we only note that (see \cite[Theorem 4.5]{Gor2019}) a function $f: X \mapsto \overline{\mathbb{R}}$ is bounded from below by a Lipschitz continuous function if and only if the set $S^-_{\text{max}}({\mathcal{L\widehat{C}}},f)$ of its maximal Lipschitz continuous concave minorants is nonempty. The presentation of $f$ in the form of the upper envelope of $S^-_{\text{max}}({\mathcal{L\widehat{C}}},f)$ is provided by the lower semicontinuity of $f$.

\vspace{1mm}

Observe that for any function $f: X \mapsto \overline{\mathbb{R}}$ its lower support set $S^-({\mathcal{L\widehat{C}}},f)$ is convex, while the set $S^-_{\text{max}}({\mathcal{L\widehat{C}}},f)$ of maximal ${\mathcal{L\widehat{C}}}$-minorants of $f$ is generally not convex.

\begin{example}\label{ex3.3}
Let us consider the function $f(x)=x^2.$ The functions $h_1(x) = 2x - 1$ and $h_2(x)=-2x-1$ are maximal ${\mathcal{L\widehat{C}}}$-minorants of $f$. Their convex combination $\frac{1}{2}h_1(x)+\frac{1}{2}h_2(x)\equiv -1$ is an ${\mathcal{L\widehat{C}}}$-minorant but not a maximal ${\mathcal{L\widehat{C}}}$-minorant of $f$.
\end{example}

\begin{theorem}\label{th4.4}
 A function $f:X \mapsto {\overline{\mathbb R}}$ is lower semicontinuous and convex in classical sense  if and only if it is regularly ${{\mathcal L}\widehat{C}}$-convex and each $h \in S^-_{{\rm max}}({{\mathcal L}\widehat{C}},f)$ is affine.
\end{theorem}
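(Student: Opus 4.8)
The plan is to prove both implications directly, leaning on the equivalences already available in Theorem~\ref{th4.3} and on Example~\ref{ex2.2}. For the forward direction, suppose $f$ is lower semicontinuous and convex in the classical sense. If $f$ is improper or never finite we may dispose of the degenerate cases separately; assuming $f$ is proper, Example~\ref{ex2.2} (with $X$ viewed as a normed, hence locally convex, space and $\mathcal{H}=\mathcal{A}(X,\mathbb{R})$) tells us $f$ is the upper envelope of its continuous affine minorants. Since each continuous affine function is Lipschitz continuous and concave, these affine minorants lie in $S^-(\mathcal{L\widehat{C}},f)$, so $f$ is $\mathcal{L\widehat{C}}$-convex, hence regularly $\mathcal{L\widehat{C}}$-convex by Theorem~\ref{th4.3}. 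It remains to show every maximal $\mathcal{L\widehat{C}}$-minorant $h$ is affine. Here I would argue that a maximal Lipschitz concave minorant of a convex function must be affine: if $h\in S^-_{\mathrm{max}}(\mathcal{L\widehat{C}},f)$ were not affine, pick any point $\bar x\in\operatorname{dom}f$ and an affine function $a\le f$ supporting $f$ near $\bar x$ (existing because $f$ is convex lsc and has continuous affine minorants with values approaching $f(\bar x)$); then $\max\{h,a\}$ is \emph{not} concave in general, so instead I would take the pointwise supremum of $h$ with a suitable family of affine minorants and pass to its concave (upper) hull — but a cleaner route is: since $f\ge h$ and $f$ is convex, the convex (lower) hull of $h$ still lies below $f$? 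That is false for convex hulls. The correct tool is the \emph{concave hull from above}: the infimal convolution-type construction does not help either. Instead, I would use the characterization that $h$ affine $\iff$ $h$ is both concave and convex, and exploit that the minimal convex majorant of $h$ (which is Lipschitz by the remark after Theorem~\ref{th3.1}) is $\le$ the minimal convex majorant of $f$, which is $f$ itself since $f$ is convex; thus $h\le \widehat{h}^{\mathrm{conv}}\le f$ where $\widehat{h}^{\mathrm{conv}}$ is Lipschitz and convex, and also (trivially) concave only if equal to $h$ — this does not immediately give affinity, so the genuine argument is: the minimal Lipschitz convex majorant $g$ of $h$ satisfies $h\le g\le f$; applying the symmetric fact, the maximal Lipschitz concave minorant of $g$ that dominates $h$ equals $h$ by maximality; but a Lipschitz function that is simultaneously the max concave minorant of its own min convex majorant (and vice versa) must be affine — this last equivalence I would extract from \cite[Theorem 4.1]{Gor2019} or prove by a short separation argument on $X\times\mathbb{R}$.

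For the converse, suppose $f$ is regularly $\mathcal{L\widehat{C}}$-convex and every $h\in S^-_{\mathrm{max}}(\mathcal{L\widehat{C}},f)$ is affine. By Theorem~\ref{th4.3}, $f$ is lower semicontinuous, so only convexity needs proof. By the definition of regular $\mathcal{L\widehat{C}}$-convexity, $f(x)=\sup\{h(x)\mid h\in S^-_{\mathrm{max}}(\mathcal{L\widehat{C}},f)\}$ for all $x$, and by hypothesis each such $h$ is affine, hence continuous affine. A pointwise supremum of a nonempty family of continuous affine functions is lower semicontinuous and convex; therefore $f$ is convex. This direction is essentially immediate once the definitions are unwound.

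The main obstacle is the affinity claim in the forward direction — showing that maximality of a Lipschitz concave minorant of a convex lsc function forces it to be affine. The cleanest path, I expect, is the following: let $h\in S^-_{\mathrm{max}}(\mathcal{L\widehat{C}},f)$ and fix any $\bar x\in\operatorname{dom}h$. Because $f$ is convex and lsc and majorizes $h$, and because $h(\bar x)\le f(\bar x)$, I can find (via a standard supporting-hyperplane/Hahn--Banach separation in $X\times\mathbb{R}$ applied to $\operatorname{epi}f$ and the point $(\bar x, h(\bar x))$, using that $\operatorname{epi}f$ is closed convex) a continuous affine function $a$ with $a\le f$ and $a(\bar x)\ge h(\bar x)$; moreover, since $h$ is $k$-Lipschitz I can choose $a$ with Lipschitz constant at most $k$ plus any slack, but in fact I should choose $a$ to also satisfy $a\ge h$ is too strong — instead I form $\tilde h:=$ the concave function $\min\{\,\text{pointwise sup of }h\text{ and }a\,\}$? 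Concavity fails. The resolution: use instead the \emph{closed concave hull} $\overline{\mathrm{co}}(h\vee a)^{\wedge}$, i.e., the smallest concave usc function $\ge \max\{h,a\}$; since both $h$ and $a$ lie below the concave (indeed affine-dominated) obstruction... here $f$ is convex, not concave, so $\max\{h,a\}\le f$ does \emph{not} pass to the concave hull staying below $f$. The honest fix is to not enlarge $h$ pointwise but to observe: if $h$ is concave, Lipschitz, and $h\le f$ with $f$ convex, then the function $g:=$ minimal Lipschitz convex majorant of $h$ satisfies $h\le g\le f$; if $h$ is not affine then $h<g$ somewhere, and then by \cite[Theorem~4.1]{Gor2019} and \cite[Theorem~4.5]{Gor2019} there is a maximal Lipschitz concave minorant $h'$ of $f$ with $h\le h'$ and $h'\ne h$ — contradicting maximality — because the gap between $h$ and its convex majorant can be "filled" on the concave side while staying under $f$. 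I would devote the bulk of the written proof to making this filling argument precise, most likely by directly invoking that $S^-_{\mathrm{max}}(\mathcal{L\widehat{C}},f)$ dominates every element of $S^-(\mathcal{L\widehat{C}},f)$ (the "moreover" clause of Theorem~\ref{th4.3} / the remark after Theorem~\ref{th2.1}) together with the observation that if $h$ is concave and not affine then $h$ is \emph{strictly} dominated by some concave $\mathcal{L\widehat{C}}$-minorant of $f$ whenever $f$ is convex — which in turn follows because a non-affine concave function has a point where it lies strictly below one of its chords, and that chord (being affine, hence $\le f$ after a vertical shift bounded by the Lipschitz/convexity data) together with $h$ generates a larger concave minorant.
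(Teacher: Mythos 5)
Your converse direction (sup of a nonempty family of continuous affine functions is lsc and convex) and the first half of your forward direction (a proper lsc convex $f$ has a continuous affine, hence Lipschitz, minorant, so Theorem~\ref{th4.3} gives regular ${\mathcal{L}\widehat{C}}$-convexity) are fine and agree with the paper. The problem is the decisive step: that every $h \in S^-_{\rm max}({\mathcal{L}\widehat{C}},f)$ is affine. You never actually prove it. Your text cycles through constructions ($\max\{h,a\}$, concave hulls, minimal convex majorants) and you yourself note each one fails; the final ``filling'' argument is circular: you claim that if $h$ is not affine there is a maximal Lipschitz concave minorant $h'$ of $f$ with $h \le h'$, $h' \ne h$, ``contradicting maximality'' --- but the existence of such an $h'$ is exactly what would have to be constructed, and the ``moreover'' clause of Theorem~\ref{th4.3} / Theorem~\ref{th2.1} gives no such $h'$ when $h$ is already maximal (it only says $h$ is dominated by some maximal minorant, which may be $h$ itself). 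What you must exhibit is some element of $S^-({\mathcal{L}\widehat{C}},f)$ lying strictly above $h$ somewhere, and your sketch of how to get it is also flawed: a non-affine concave function lies strictly \emph{above} one of its chords (not below), the chord line need not minorize $f$, and, as you observed, neither $\max\{h,a\}$ nor its concave hull is guaranteed to stay below $f$. Likewise the claim $h \le g \le f$ for ``the'' minimal Lipschitz convex majorant $g$ of $h$ is unjustified, since $f$ is extended-real-valued and not Lipschitz, so the results of \cite{Gor2019} do not hand you a minimal convex majorant sitting under $f$.

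The missing idea is the sandwich theorem, which is the whole content of the paper's argument: since $h$ is concave and (Lipschitz) continuous, $f$ is convex, and $h \le f$, Hahn--Banach separation in $X\times\mathbb{R}$ (e.g.\ \cite[Corollary~1.2.10]{BorVan10}) yields a \emph{continuous affine} function $a$ with $h \le a \le f$. Then $a \in {\mathcal{L}\widehat{C}}$ is an ${\mathcal{L}\widehat{C}}$-minorant of $f$ dominating $h$, so maximality of $h$ forces $h = a$, i.e.\ $h$ is affine --- a two-line argument that replaces the entire problematic portion of your proposal. (You gesture at ``a short separation argument on $X\times\mathbb{R}$'' only to support a different, murkier equivalence about a function coinciding with the maximal concave minorant of its minimal convex majorant; applied directly to the pair $h \le f$ it already finishes the proof.)
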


\begin{proof}
The ``only if'' part holds since any function that can be represented as the upper envelope of the family of continuous affine functions, is (see, for instance, \cite{ET76}) lower semicontinuous and convex in the classical sense.

For the ``if'' part suppose that $f:X \mapsto {\overline{\mathbb R}}$ is lower semicontinuous and convex in the classical sense. It follows from the separation theorems that $f$ is bounded from below by a continuous affine (and, consequently, Lipschitz continuous) function. Hence, due to Theorem \ref{th4.3} $f$ is regularly
${{\mathcal L}\widehat{C}}$-convex.

Further, for each maximal Lipschitz continuous concave minorant $h \in S^-_{\text{max}}({{\mathcal L}\widehat{C}},f)$ of the convex function $f$ we have $h \le f.$ By the sandwich theorem (see, for instant, \cite[Corollary 1.2.10]{BorVan10}) there exists a continuous affine function
$a: X \mapsto {\mathbb{R}}$ such that $h \le a \le f$. Through maximality of $h$ we have $h = a$.
\end{proof}

Thus, the class of ${{\mathcal L}\widehat{C}}$-convex functions is a nontrivial extension of the class of convex functions.

In addition we note that
if a function $f:X \mapsto {\overline{\mathbb R}}$ is lower semicontinuous and convex then
$
S^-_{\text{max}}({{\mathcal L}\widehat{C}},f)  = S^-_{\text{max}}({\mathcal{A}}(X,\mathbb{R}),f),
$
where ${\mathcal{A}}(X,\mathbb{R})$ stands for the space of continuous  affine functions defined on $X$.

Observe also that
$
S^-_{\text{max}}({\mathcal{A}}(X,\mathbb{R}),f)=\{x \mapsto x^*(x) - f^*(x^*) \mid x^* \in {\rm dom}\,f^*\}
$
where
$
f^*(x^*) := \sup\limits_{x \in X}\left(x^*(x) - f(x)\right), x^* \in X^*,
$
is the Fenchel conjugate \cite{ET76} of $f$.

Further we focus on ${\mathcal{L}\widehat{C}}$-subdifferentiability  of functions. Let us begin with recalling of the necessary definitions.

An ${{\mathcal L}\widehat{C}}$-minorant $h \in S^-({\mathcal L}\widehat{C},f)$ of the function $f:X \mapsto {\overline{\mathbb R}}$ is said \textit{to support $f$ from below at a point} $\bar{x} \in {\rm dom}\,f$, if $h(\bar{x})=f(\bar{x}).$

The set of all ${{\mathcal L}\widehat{C}}$-minorants, supporting $f$ from below at the point $\bar{x} \in {\rm dom}\,f$, is denoted by $S^-({{{\mathcal L}\widehat{C}}},f,\bar{x})$.

\begin{definition}\label{d5.1}
A function $f:X \mapsto \overline{\mathbb{R}}$ is called \textit{${\mathcal{L}\widehat{C}}$-subdifferentiable at a point $\bar{x} \in {\rm dom}\,f$} if $S^-({\mathcal{L}\widehat{C}},\,f,\,\bar{x}) \ne \varnothing$.
\end{definition}

By $S^-_{\text{max}}({{\mathcal L}\widehat{C}},f,\bar{x})$ we denote the subset of $S^-({{\mathcal L}\widehat{C}},f,\bar{x})$ consisting of all maximal ${{\mathcal L}\widehat{C}}$-minorants of $f$ supporting $f$ from below at the point $\bar{x}$. Clearly, that
$S^-_{\text{max}}({{\mathcal L}\widehat{C}},f,\bar{x}) = S^-_{\text{max}}({{\mathcal L}\widehat{C}},f) \bigcap S^-({{\mathcal L}\widehat{C}},f,\bar{x})$.

\vspace{3pt}

\begin{proposition}\label{pr5.1}
For a proper function $f:X \mapsto {\overline{\mathbb R}}$ and a point $\bar{x} \in {\rm dom}\,f$ one has
$$
S^-({{\mathcal L}\widehat{C}},f,\bar{x}) \ne \varnothing \Longleftrightarrow S^-_{\rm max}({{\mathcal L}\widehat{C}},f,\bar{x}) \ne \varnothing.
$$
Moreover,  for  every $h \in S^-({{\mathcal L}\widehat{C}},f,\bar{x})$ there exists $\bar{h} \in S^-_{\rm max}({{\mathcal L}\widehat{C}},f,\bar{x})$ such that $h \le \bar{h}.$
\end{proposition}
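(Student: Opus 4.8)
The plan is to fix an arbitrary $h\in S^-({\mathcal{L}\widehat{C}},f,\bar{x})$, with Lipschitz constant $k$ say, set $c:=f(\bar{x})=h(\bar{x})$, and run Zorn's lemma on the family $\mathcal{F}:=\{\,g\in S^-({\mathcal{L}\widehat{C}},f)\mid g\ge h\,\}$, partially ordered by the pointwise ordering. This family is nonempty (it contains $h$), and every $g\in\mathcal{F}$ automatically supports $f$ from below at $\bar{x}$, since $h\le g\le f$ together with $h(\bar{x})=f(\bar{x})$ force $g(\bar{x})=f(\bar{x})$; thus $\mathcal{F}\subseteq S^-({\mathcal{L}\widehat{C}},f,\bar{x})$. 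The key preliminary observation is that every $g\in\mathcal{F}$ obeys the two-sided estimate $c-k\|x-\bar{x}\|\le g(x)\le c+k\|x-\bar{x}\|$ for all $x\in X$: the left inequality is just $g\ge h$ combined with the $k$-Lipschitz continuity of $h$, while the right one follows from concavity of $g$ applied at the midpoint $\bar{x}=\frac12 x+\frac12(2\bar{x}-x)$, which gives $c=g(\bar{x})\ge\frac12 g(x)+\frac12 g(2\bar{x}-x)\ge\frac12 g(x)+\frac12\bigl(c-k\|x-\bar{x}\|\bigr)$.

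To apply Zorn's lemma I would then check that every chain $\mathcal{C}\subseteq\mathcal{F}$ has an upper bound in $\mathcal{F}$ (the empty chain being covered by $\mathcal{F}\ne\varnothing$), the natural candidate being the pointwise supremum $\tilde h:=\sup\{g\mid g\in\mathcal{C}\}$. One has $h\le\tilde h\le f$, hence $\tilde h(\bar{x})=c$; $\tilde h$ is concave, being the supremum of a totally ordered, hence upward directed, family of concave functions; and $\tilde h$ inherits the two-sided estimate above, so in particular it is everywhere finite. The point that is not routine is that $\tilde h$ must also be globally Lipschitz, and this is the main obstacle. It is settled by the (elementary) fact that a concave function squeezed between $c-k\|\cdot-\bar{x}\|$ and $c+k\|\cdot-\bar{x}\|$ is $k$-Lipschitz on $X$: after translating $\bar{x}$ to the origin, write $y=\frac{1}{1+\lambda}z+\frac{\lambda}{1+\lambda}x$ with $z:=y+\lambda(y-x)$, use concavity to obtain $g(x)-g(y)\le\frac{1}{1+\lambda}\bigl(g(x)-g(z)\bigr)\le\frac{k}{1+\lambda}\bigl(\|x\|+\|z\|\bigr)$, and let $\lambda\to+\infty$; alternatively one invokes the Lipschitz criteria in \cite[Theorems 4.1 and 4.5]{Gor2019}. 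Hence $\tilde h\in{\mathcal{L}\widehat{C}}$ and $\tilde h\in\mathcal{F}$.

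Zorn's lemma now yields a maximal element $\bar h$ of $\mathcal{F}$; by construction $h\le\bar h$ and $\bar h\in S^-({\mathcal{L}\widehat{C}},f,\bar{x})$. It remains to see that $\bar h$ is actually a maximal ${\mathcal{L}\widehat{C}}$-minorant of $f$: if $g\in S^-({\mathcal{L}\widehat{C}},f)$ satisfies $g\ge\bar h$, then $g\ge\bar h\ge h$ and $g\le f$ place $g$ in $\mathcal{F}$, and maximality of $\bar h$ in $\mathcal{F}$ gives $g=\bar h$. Together with $\bar h\in S^-({\mathcal{L}\widehat{C}},f,\bar{x})$ this means $\bar h\in S^-_{\rm max}({\mathcal{L}\widehat{C}},f,\bar{x})$, which is precisely the "moreover" assertion. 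The stated equivalence then follows immediately: the implication "$\Rightarrow$" is the "moreover" statement applied to any $h\in S^-({\mathcal{L}\widehat{C}},f,\bar{x})$, while "$\Leftarrow$" is trivial from the inclusion $S^-_{\rm max}({\mathcal{L}\widehat{C}},f,\bar{x})\subseteq S^-({\mathcal{L}\widehat{C}},f,\bar{x})$. As indicated, the only genuine difficulty is controlling the supremum of an arbitrary chain so that it remains finite-valued and Lipschitz, and this is exactly what the uniform two-sided bound enjoyed by all members of $\mathcal{F}$ is designed to do.
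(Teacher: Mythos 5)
Your proof is correct, but it takes a genuinely different route from the paper's. The paper argues in the larger class $\widehat{C}$ of all (not necessarily Lipschitz) concave minorants: by Example~\ref{ex2.3} (i.e.\ \cite[Theorem 3.2]{Gor2019}) the concave minorant $h$ of $f$ lies below a maximal concave minorant $\bar h$ of $f$, and \cite[Lemma 4.4]{Gor2019} guarantees that this $\bar h$, sitting above the Lipschitz function $h$, is itself Lipschitz continuous; since maximality among all concave minorants a fortiori gives maximality among the Lipschitz concave ones, and $h\le\bar h\le f$ with $h(\bar x)=f(\bar x)$ forces $\bar h(\bar x)=f(\bar x)$, the conclusion follows in a few lines. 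You instead stay inside ${\mathcal L}\widehat{C}$ and run Zorn's lemma on the family $\{g\in S^-({\mathcal L}\widehat{C},f)\mid g\ge h\}$, which requires the one genuinely nontrivial verification that the pointwise supremum of a chain remains real-valued and Lipschitz; your uniform two-sided cone estimate $c-k\|x-\bar x\|\le g(x)\le c+k\|x-\bar x\|$ (lower bound from $g\ge h$, upper bound from concavity through the midpoint $\bar x$) together with the squeeze argument showing that a concave function between these two cones is $k$-Lipschitz handles this correctly, and your final step correctly upgrades maximality in the restricted family to maximality in all of $S^-({\mathcal L}\widehat{C},f)$. In effect you give a self-contained, ``pointed'' instance of the Zorn scheme of Theorem~\ref{th2.1} and reprove internally the Lipschitz-preservation facts the paper imports from \cite{Gor2019}; what the paper's route buys is brevity and reuse of established machinery, while yours buys self-containment and the extra quantitative information that the maximal supporting minorant can be chosen with the same Lipschitz constant $k$ as $h$.
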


\begin{proof}
The implication $S^-_{\text{max}}({{\mathcal L}\widehat{C}},f,\bar{x}) \ne \varnothing \Longrightarrow S^-({{\mathcal L}\widehat{C}},f,\bar{x}) \ne \varnothing$ is obvious.

Suppose that $S^-({{\mathcal L}\widehat{C}},f,\bar{x}) \ne \varnothing$ and consider an arbitrary $h \in S^-({{\mathcal L}\widehat{C}},f,\bar{x}).$ Since $h$ is a concave minorant of $f$ then (see Example \ref{ex2.3}) there exists a maximal concave minorant $\bar{h}$ of $f$ such that $h \le \bar{h} \le f.$ It follows from \cite[Lemma 4.4.]{Gor2019} that the function $\bar{h}$ is Lipschitz continuous and, consequently, from the equality $h(\bar{x}) = f(\bar{x})$ we get that $\bar{h} \in S^-_{\text{max}}({{\mathcal L}\widehat{C}},f,\bar{x})$. Thus, $S^-_{\text{max}}({{\mathcal L}\widehat{C}},f,\bar{x}) \ne \varnothing$ and the converse implication also holds.

Since $h$ is an arbitrary element of $S^-({{\mathcal L}\widehat{C}},f,\bar{x})$ the last claim of the proposition is true as well.
\end{proof}

\begin{corollary}\label{cor5.1a}
A function $f$ is ${\mathcal{L}\widehat{C}}$-subdifferentiable at a point $\bar{x} \in {\rm dom}\,f$ if and only if $S^-_{{\rm max}}({\mathcal{L}\widehat{C}},\,f,\,\bar{x}) \ne \varnothing$.
\end{corollary}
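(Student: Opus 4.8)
The plan is simply to chain Definition~\ref{d5.1} with Proposition~\ref{pr5.1}. By Definition~\ref{d5.1}, the statement ``$f$ is ${\mathcal{L}\widehat{C}}$-subdifferentiable at $\bar{x}$'' means precisely that $S^-({\mathcal{L}\widehat{C}},f,\bar{x}) \ne \varnothing$. Proposition~\ref{pr5.1} asserts that this nonemptiness is equivalent to $S^-_{\rm max}({\mathcal{L}\widehat{C}},f,\bar{x}) \ne \varnothing$. Composing the two equivalences gives the corollary directly, so the proof is a one-line deduction.

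There is no real obstacle to overcome here: the only delicate point, namely lifting an arbitrary supporting ${\mathcal{L}\widehat{C}}$-minorant $h \in S^-({\mathcal{L}\widehat{C}},f,\bar{x})$ to a \emph{maximal} one that still supports $f$ at $\bar{x}$, has already been dealt with inside the proof of Proposition~\ref{pr5.1}. That argument uses the existence of a maximal concave minorant $\bar{h}$ with $h \le \bar{h} \le f$ (Example~\ref{ex2.3}), the fact from \cite{Gor2019} that such a maximal concave minorant is automatically Lipschitz continuous, and the observation that the equality $h(\bar{x}) = f(\bar{x})$ forces $\bar{h}(\bar{x}) = f(\bar{x})$ through the chain $h \le \bar{h} \le f$. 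Consequently, I would simply write the corollary's proof as an immediate consequence of Definition~\ref{d5.1} and Proposition~\ref{pr5.1}, with no additional computation required.
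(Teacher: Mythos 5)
Your proposal is correct and matches the paper's intent exactly: the corollary is an immediate consequence of Definition~\ref{d5.1} (which defines ${\mathcal{L}\widehat{C}}$-subdifferentiability at $\bar{x}$ as $S^-({\mathcal{L}\widehat{C}},f,\bar{x}) \ne \varnothing$) combined with the equivalence in Proposition~\ref{pr5.1}. You also correctly locate where the substantive work lies (inside the proof of Proposition~\ref{pr5.1}), so nothing further is needed.
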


In other words, a function $f$ is ${\mathcal{L}\widehat{C}}$-subdifferentiable at a point $\bar{x} \in {\rm dom}\,f$ if and only if there exists a (maximal) ${\mathcal{L}\widehat{C}}$-minorant of $f$ that supports $f$ from below at $\bar{x}.$

\begin{theorem}\label{th5.1}
An extended-real-valued function $f:X \mapsto \overline{\mathbb{R}}$ is \textit{${\mathcal{L}\widehat{C}}$-subdifferentiable at a point $\bar{x} \in {\rm dom}\,f$} if and only if there exists a constant $k \ge 0$ such that
\begin{equation}\label{e3.4a}
f(x) \ge f(\bar{x}) - k\|x-\bar{x}\|\,\,\forall\,\,x \in X.
\end{equation}
\end{theorem}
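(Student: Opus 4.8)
The plan is to prove the two implications directly, the whole argument resting on the single observation that, for any $\bar x \in X$ and any $k \ge 0$, the function
$$
h_{\bar x,k}(x) := f(\bar x) - k\|x - \bar x\|,\qquad x \in X,
$$
belongs to ${\mathcal L}\widehat{C}$: it is concave, being the negative of the positive multiple $k\|\cdot - \bar x\|$ of a convex function (the shifted norm), and it is $k$-Lipschitz continuous by the triangle inequality. Moreover $h_{\bar x,k}(\bar x) = f(\bar x)$. This is exactly the kind of ${\mathcal L}\widehat{C}$-minorant already exploited in the proof of Theorem \ref{th3.1}.

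For the ``if'' part, assume \eqref{e3.4a} holds for some $k \ge 0$. Then \eqref{e3.4a} reads precisely $h_{\bar x,k} \le f$, so $h_{\bar x,k} \in S^-({\mathcal L}\widehat{C},f)$; together with $h_{\bar x,k}(\bar x) = f(\bar x)$ this gives $h_{\bar x,k} \in S^-({\mathcal L}\widehat{C},f,\bar x)$, which is therefore nonempty. By Definition \ref{d5.1}, $f$ is ${\mathcal L}\widehat{C}$-subdifferentiable at $\bar x$.

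For the ``only if'' part, assume $f$ is ${\mathcal L}\widehat{C}$-subdifferentiable at $\bar x$ and pick $h \in S^-({\mathcal L}\widehat{C},f,\bar x)$. Being an element of ${\mathcal L}\widehat{C}$, the function $h$ is $k$-Lipschitz continuous for some $k \ge 0$, so applying the equivalent form \eqref{e3.2} of the Lipschitz condition with $y = \bar x$ gives $h(x) \ge h(\bar x) - k\|x - \bar x\|$ for all $x \in X$. Combining this with $h \le f$ and $h(\bar x) = f(\bar x)$ yields $f(x) \ge h(x) \ge f(\bar x) - k\|x - \bar x\|$ for all $x \in X$, which is \eqref{e3.4a}.

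There is essentially no hard step here; the only point requiring (elementary) care is the verification that $h_{\bar x,k} \in {\mathcal L}\widehat{C}$, i.e. that a negative multiple of a norm is Lipschitz continuous and concave. It is worth remarking afterwards that, by Proposition \ref{pr5.1} (equivalently Corollary \ref{cor5.1a}), the supporting minorant in the ``only if'' part may in addition be chosen maximal, and that the infimum of the admissible constants $k$ in \eqref{e3.4a} is a one-sided, $\bar x$-localized Lipschitz-type modulus of $f$ at $\bar x$; this quantity will reappear when the ${\mathcal L}\widehat{C}_\theta$-subdifferential is discussed in Section 4.
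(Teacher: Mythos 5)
Your proposal is correct and follows essentially the same argument as the paper: the ``only if'' direction applies the Lipschitz estimate of a supporting minorant at $y=\bar x$ and uses $h\le f$, $h(\bar x)=f(\bar x)$, while the ``if'' direction exhibits $x\mapsto f(\bar x)-k\|x-\bar x\|$ as a Lipschitz continuous concave minorant supporting $f$ at $\bar x$. Your added verification that this function lies in ${\mathcal L}\widehat{C}$ is a harmless elaboration of what the paper states without proof.
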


\begin{proof}
Let $f:X \mapsto \overline{\mathbb{R}}$ be ${\mathcal{L}\widehat{C}}$-subdifferentiable at a point $\bar{x} \in {\rm dom}\,f$. It means that $S^-({{{\mathcal L}\widehat{C}}},f,\bar{x})$ is nonempty. Choose an arbitrary $h \in S^-({{{\mathcal L}\widehat{C}}},f,\bar{x}).$ Since $h$ is Lipschitz continuous, there exists  $k \ge 0$ such that $|h(x)-h(y)|\le k\|x- y\|$ for all $x,y \in X.$ Substituting $\bar{x}$ for $y$ into the latter inequality and taking into account that $h \le f$ and $h(\bar{x})=f(\bar{x})$, we arrive at \eqref{e3.4a}.

Suppose now, that for some $k \ge 0$ the inequality \eqref{e3.4a} holds. Since the function $\tilde{h}: x \mapsto f(\bar{x}) -k\|x-\bar{x}\|$ is a Lipschitz continuous concave minorant of $f$ and $\tilde{h}(\bar{x})=f(\bar{x}),$ we conclude that $\tilde{h} \in S^-({{{\mathcal L}\widehat{C}}},f,\bar{x})$ and, consequently, $S^-({{\mathcal L}\widehat{C}},f,\bar{x}) \ne \varnothing$. Hence, the function $f:X \mapsto \overline{\mathbb{R}}$ is ${\mathcal{L}\widehat{C}}$-subdifferentiable at a point $\bar{x} \in {\rm dom}\,f$.
\end{proof}

\begin{remark}\label{r5.3}
Theorem \ref{th5.1} shows that ${{\mathcal L}\widehat{C}}$-subdifferentiability of a function $f$ at a point $\bar{x}$ is closely related to the property called the calmness of $f$ at $\bar{x}$
(see, for instance, \cite{RW1998,Penot2013}).
\end{remark}

\begin{corollary}\label{cor5.1}
If a function $f:X \mapsto {\mathbb{R}}$ is Lipschitz continuous on $X$ then $f$ is ${\mathcal{L}\widehat{C}}$-subdifferentiable at every point $\bar{x} \in X$.
\end{corollary}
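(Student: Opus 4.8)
The plan is to deduce this immediately from Theorem \ref{th5.1}. Suppose $f:X\mapsto{\mathbb R}$ is $k$-Lipschitz continuous on $X$ for some $k\ge 0$, and fix an arbitrary point $\bar{x}\in X$. Since $f$ is real-valued, $\bar{x}\in{\rm dom}\,f$. Applying the Lipschitz inequality \eqref{e3.1} with $y=\bar{x}$ gives $f(\bar{x})-f(x)\le|f(x)-f(\bar{x})|\le k\|x-\bar{x}\|$ for all $x\in X$, which rearranges to
\begin{equation*}
f(x)\ge f(\bar{x})-k\|x-\bar{x}\|\quad\forall\,x\in X.
\end{equation*}
This is precisely condition \eqref{e3.4a} of Theorem \ref{th5.1}, so that theorem yields ${\mathcal{L}\widehat{C}}$-subdifferentiability of $f$ at $\bar{x}$. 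As $\bar{x}\in X$ was arbitrary, $f$ is ${\mathcal{L}\widehat{C}}$-subdifferentiable at every point.

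There is essentially no obstacle here: the statement is a one-line specialization of Theorem \ref{th5.1} obtained by noting that a global Lipschitz constant for $f$ serves simultaneously as the constant $k$ in \eqref{e3.4a} at every point. Equivalently, one could argue directly as in the proof of Theorem \ref{th3.1}: the function $h_{\bar{x}}(x):=f(\bar{x})-k\|x-\bar{x}\|$ lies in $S^-({\mathcal{L}\widehat{C}},f)$ by \eqref{e3.2} and satisfies $h_{\bar{x}}(\bar{x})=f(\bar{x})$, so $h_{\bar{x}}\in S^-({\mathcal{L}\widehat{C}},f,\bar{x})$ and hence $S^-({\mathcal{L}\widehat{C}},f,\bar{x})\ne\varnothing$. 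I would present the first (shorter) version, citing Theorem \ref{th5.1}.
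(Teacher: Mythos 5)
Your proposal is correct and matches the paper's proof, which likewise deduces the corollary immediately from Theorem \ref{th5.1}; you have simply spelled out the one-line verification that the global Lipschitz constant yields \eqref{e3.4a} at every point. The alternative direct argument you mention (via $h_{\bar{x}}$, as in Theorem \ref{th3.1}) is also fine but unnecessary.
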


\proof The claim is immediate  from Theorem \ref{th5.1}. 
\hfill $\Box$

\begin{proposition}\label{pr5.2}
Let $f:X \mapsto \overline{\mathbb{R}}$ be ${\mathcal{L}\widehat{C}}$-subdifferentiable at a point $\bar{x} \in {\rm dom}\,f$.
Then
\begin{equation}\label{e5.2a}
{\mathcal A}(X,{\mathbb{R}}) \bigcap S^-({{\mathcal L}\widehat{C}},f,\bar{x}) \subset S^-_{\rm max}({{\mathcal L}\widehat{C}},f,\bar{x}).
\end{equation}
Here ${\mathcal A}(X,{\mathbb{R}})$ stands for the vector space of continuous affine functions defined on $X$.

In other words, any continuous affine minorant supporting $f$ from below at some point $\bar{x} \in {\rm dom}\,f$ is a maximal element in $S^-({{\mathcal L}\widehat{C}},f,\bar{x})$.
\end{proposition}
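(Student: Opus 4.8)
The plan is to prove the slightly stronger assertion that every $a \in {\mathcal A}(X,{\mathbb{R}}) \cap S^-({{\mathcal L}\widehat{C}},f,\bar{x})$ is maximal already in the whole support set $S^-({{\mathcal L}\widehat{C}},f)$; since such an $a$ also belongs to $S^-({{\mathcal L}\widehat{C}},f,\bar{x})$ by hypothesis, the inclusion \eqref{e5.2a} then follows from the identity $S^-_{\rm max}({{\mathcal L}\widehat{C}},f,\bar{x}) = S^-_{\rm max}({{\mathcal L}\widehat{C}},f) \cap S^-({{\mathcal L}\widehat{C}},f,\bar{x})$ recorded before Proposition \ref{pr5.1}. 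First I would note that a continuous affine function is both concave and Lipschitz continuous, so $a \in {{\mathcal L}\widehat{C}}$, and by assumption $a \le f$ with $a(\bar{x}) = f(\bar{x})$; in particular $a \in S^-({{\mathcal L}\widehat{C}},f)$.

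To establish maximality, let $h \in S^-({{\mathcal L}\widehat{C}},f)$ satisfy $a \le h$; the goal is to show $h = a$. Evaluating the chain $a \le h \le f$ at $\bar{x}$ gives $a(\bar{x}) \le h(\bar{x}) \le f(\bar{x}) = a(\bar{x})$, hence $h(\bar{x}) = a(\bar{x})$. Now consider the function $g := h - a$. Since $h$ is concave and $a$ is affine, $g$ is a real-valued concave function on $X$; moreover $g \ge 0$ on $X$ and $g(\bar{x}) = 0$.

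The essential (and essentially the only) point is the rigidity of nonnegative concave functions: for an arbitrary $y \in X$ the point $\bar{x}$ is the midpoint of $y$ and $2\bar{x}-y$, so concavity yields
\[
0 = g(\bar{x}) \ge \frac{1}{2}\,g(y) + \frac{1}{2}\,g(2\bar{x}-y) \ge 0 ,
\]
which forces $g(y) = 0$. Since $y$ was arbitrary, $g \equiv 0$, i.e.\ $h = a$. Thus $a$ is a maximal element of $S^-({{\mathcal L}\widehat{C}},f)$, and together with $a \in S^-({{\mathcal L}\widehat{C}},f,\bar{x})$ this gives $a \in S^-_{\rm max}({{\mathcal L}\widehat{C}},f,\bar{x})$, which is exactly \eqref{e5.2a}. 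I do not anticipate a genuine obstacle: the whole argument rests on the midpoint inequality above, and the standing assumption that $f$ be ${\mathcal L}\widehat{C}$-subdifferentiable at $\bar{x}$ serves only to make the left-hand side of \eqref{e5.2a} potentially nonempty — when it is empty the inclusion holds trivially.
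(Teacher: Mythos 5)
Your proof is correct, and it takes a genuinely more elementary route than the paper's. The paper argues by contradiction: assuming $a \in {\mathcal A}(X,{\mathbb{R}}) \bigcap S^-({{\mathcal L}\widehat{C}},f,\bar{x})$ is not maximal, it invokes Proposition \ref{pr5.1} to produce a maximal minorant $\bar{h} \in S^-_{\rm max}({{\mathcal L}\widehat{C}},f,\bar{x})$ with $a \le \bar{h}$, then uses a sandwich-type (supporting affine majorant) theorem for the Lipschitz continuous concave function $\bar{h}$ to obtain a continuous affine $\bar{a}$ with $a \le \bar{h} \le \bar{a}$ and equality of all three at $\bar{x}$, and finally concludes $a = \bar{h} = \bar{a}$ from the rigidity of affine functions. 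You instead take an arbitrary $h \in S^-({{\mathcal L}\widehat{C}},f)$ with $a \le h$ and show directly that $h = a$, via the midpoint argument applied to the nonnegative concave function $g = h - a$ vanishing at $\bar{x}$; this rigidity step is sound, since $g$ is real-valued and concave and $0 = g(\bar{x}) \ge \tfrac12 g(y) + \tfrac12 g(2\bar{x}-y) \ge \tfrac12 g(y) \ge 0$ forces $g \equiv 0$. Your route needs neither Proposition \ref{pr5.1} nor any Hahn--Banach-type separation, and it does not even use the Lipschitz continuity of $h$, so it in fact shows that $a$ is maximal among all real-valued concave minorants of $f$ --- a slightly stronger conclusion. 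The identity $S^-_{\rm max}({{\mathcal L}\widehat{C}},f,\bar{x}) = S^-_{\rm max}({{\mathcal L}\widehat{C}},f) \bigcap S^-({{\mathcal L}\widehat{C}},f,\bar{x})$ you rely on is exactly the one recorded in the paper, and your remark that the subdifferentiability hypothesis serves only to make the left-hand side of \eqref{e5.2a} potentially nonempty is accurate. What the paper's argument buys is alignment with the machinery (maximal minorants, supporting affine majorants) used throughout the text; what yours buys is brevity, self-containedness, and a marginally more general statement.
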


\proof If ${\mathcal A}(X,{\mathbb{R}}) \bigcap S^-({{\mathcal L}\widehat{C}},f,\bar{x}) = \varnothing$, the inclusion \eqref{e5.2a} is trivial. To prove \eqref{e5.2a} for the case when ${\mathcal A}(X,{\mathbb{R}}) \bigcap S^-({{\mathcal L}\widehat{C}},f,\bar{x}) \ne \varnothing$ we assume the contrary and take some $a \in {\mathcal A}(X,{\mathbb{R}}) \bigcap S^-({{\mathcal L}\widehat{C}},f,\bar{x})$ that does not belong to $S^-_{\text{max}}({{\mathcal L}\widehat{C}},f,\bar{x}).$ Then there exists $\bar{h} \in S^-_{\text{max}}({{\mathcal L}\widehat{C}},f,\bar{x})$ such that $a \le \bar{h}.$ Since $\bar{h}$ is concave and Lipschitz continuous, there is $\bar{a} \in {\mathcal A}(X,{\mathbb{R}})$ that satisfies $\bar{h} \le \bar{a}$ and $\bar{h}(\bar{x}) = \bar{a}(\bar{x}).$ Thus, we have $a \le \bar{h} \le \bar{a}$ and, in addition, $a(\bar{x}) = \bar{h}(\bar{x}) = \bar{a}(\bar{x}) = f(\bar{x}).$  Consequently, $a =\bar{h} = \bar{a}$, that contradicts the assumption $a \notin S^-_{\text{max}}({{\mathcal L}\widehat{C}},f,\bar{x})$. \hfill $\Box$

\begin{theorem}\label{th3.5}
Let $X$ be a complete normed vector space.Then for any ${{\mathcal L}\widehat{C}}$-convex function $f:X \mapsto \overline{\mathbb{R}}$ the set of points at which $f$ is ${\mathcal{L}\widehat{C}}$-subdifferentiable  is dense in ${\rm dom}\,f.$
\end{theorem}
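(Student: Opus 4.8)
The plan is to deduce the result from Ekeland's variational principle, mirroring the classical proof of the Br{\o}ndsted--Rockafellar theorem. Two observations make the translation immediate. First, by Theorem \ref{th4.3}, every ${{\mathcal L}\widehat{C}}$-convex function $f$ is lower semicontinuous and admits a $k_0$-Lipschitz continuous minorant $h_0\le f$; we may assume $f$ is proper (${\rm dom}\,f\ne\varnothing$), since otherwise the statement is vacuous, and recall that $f(x)>-\infty$ for every $x\in X$. Second, by Theorem \ref{th5.1}, $f$ is ${{\mathcal L}\widehat{C}}$-subdifferentiable at $\bar x\in{\rm dom}\,f$ exactly when there is some $k\ge 0$ with $f(x)\ge f(\bar x)-k\|x-\bar x\|$ for all $x\in X$ --- which is precisely the form of the conclusion produced by Ekeland's principle. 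Hence it suffices to prove: given $x_0\in{\rm dom}\,f$ and $\varepsilon>0$, there exists $\bar x$ with $\|\bar x-x_0\|\le\varepsilon$ at which \eqref{e3.4a} holds for some $k\ge 0$.

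To obtain a functional that is bounded below, I would pass to $g:=f-h_0$. Since $h_0$ is real-valued and $h_0\le f$, the function $g:X\mapsto{\mathbb{R}}_{+\infty}$ is well defined, nonnegative, lower semicontinuous, and proper with ${\rm dom}\,g={\rm dom}\,f\ni x_0$; in particular $\inf_X g\in[0,+\infty)$ is finite and $g(x_0)<+\infty$. Put $\eta:=g(x_0)-\inf_X g+1\in[1,+\infty)$, so that $g(x_0)\le\inf_X g+\eta$.

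Now I would apply Ekeland's variational principle (see, e.g., \cite{Penot2013}) to $g$ on the complete metric space $X$, with parameter $\eta$, with $\lambda:=\varepsilon$, and starting from $x_0$: this yields a point $\bar x\in X$ with $g(\bar x)\le g(x_0)$, $\|\bar x-x_0\|\le\varepsilon$, and $g(w)\ge g(\bar x)-(\eta/\varepsilon)\|w-\bar x\|$ for all $w\in X$. From $0\le g(\bar x)\le g(x_0)<+\infty$ we get $\bar x\in{\rm dom}\,f$. Adding $h_0(w)$ to the last inequality and using $h_0(w)\ge h_0(\bar x)-k_0\|w-\bar x\|$ gives $f(w)\ge f(\bar x)-(k_0+\eta/\varepsilon)\|w-\bar x\|$ for every $w\in X$. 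Thus \eqref{e3.4a} holds at $\bar x$ with $k:=k_0+\eta/\varepsilon\ge 0$, and Theorem \ref{th5.1} shows that $f$ is ${{\mathcal L}\widehat{C}}$-subdifferentiable at $\bar x$. Since $x_0\in{\rm dom}\,f$ and $\varepsilon>0$ were arbitrary, the set of such points is dense in ${\rm dom}\,f$.

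The argument is short once Ekeland's principle is invoked; the only step needing care is verifying its hypotheses --- completeness of $X$ (the reason for this assumption) and boundedness below of the functional, which is arranged by the shift $g=f-h_0$ whose existence is exactly what Theorem \ref{th4.3} provides. I do not anticipate any genuine obstacle beyond this setup.
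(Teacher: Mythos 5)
Your proof is correct and follows essentially the same route as the paper: both apply Ekeland's variational principle to the nonnegative, lower semicontinuous difference between $f$ and a Lipschitz concave minorant, then translate the resulting conical estimate back into ${\mathcal{L}\widehat{C}}$-subdifferentiability at the perturbed point. The only cosmetic difference is that the paper chooses a minorant $h$ with $h(\bar{x})+\varepsilon\ge f(\bar{x})$ (so the Ekeland parameter is small) and exhibits the supporting minorant $\bar h$ explicitly, whereas you use an arbitrary fixed minorant $h_0$ with a possibly large parameter $\eta$ and conclude via the calmness criterion of Theorem \ref{th5.1}; both yield the density claim.
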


\proof Since the function $f$ is ${\mathcal{L}\widehat{C}}$-convex, the set  $S^-({\mathcal{L}\widehat{C}},f)$ consisting of ${\mathcal{L}\widehat{C}}$-minorants of $f$ is nonempty and
\begin{equation}\label{e3.5a}
f(x)= \sup\{h(x) \mid h \in S^-({\mathcal{L}\widehat{C}},f)\}\,\,\text{for all}\,\,x \in X.
\end{equation}

Consider an arbitrary point $\bar{x} \in {\rm dom}\,f.$
It follows from the equality \eqref{e3.5a} that for any $\varepsilon > 0$ there exists a function $h \in S^-({\mathcal{L}\widehat{C}},f)$ such that $h(\bar{x})+\varepsilon \ge f(\bar{x})$. Since $h$ is Lipschitz continuous and by Theorem \ref{th4.3} $f$ is lower semicontinuous the function $g: x \mapsto f(x)-h(x)$ also is lower semicontinuous. Moreover, $g(x) \ge 0\,\,\forall\,\,x \in X$, and, consequently, $g$ is bounded from below. In addition, we have $g(\bar{x}) \le \varepsilon \le \inf\limits_{x \in X}g(x) + \varepsilon.$ Now, it follows from Ekeland's variational principle \cite{ET76,Penot2013,Kruger,Ioffe2017,Mord1} that for an arbitrary $\delta > 0$ there exists a point $x_\delta \in X$ such that
\begin{itemize}
\item[$(i)$]
$g(x_\delta) + \displaystyle\frac{\varepsilon}{\delta}\|x_\delta-\bar{x}\| \le g(\bar{x});$
\item[$(ii)$]
$\|x_\delta-\bar{x}\| \le \delta;$
\item[$(iii)$]
$g(x_\delta) < g(x) + \displaystyle\frac{\varepsilon}{\delta}\|x-x_\delta\|\,\,\text{for all}\,\,x \in X,x \ne x_\delta.$
\end{itemize}

The property $(iii)$ can be rewritten in the form
$$
f(x) > h(x) - \displaystyle\frac{\varepsilon}{\delta}\|x-x_\delta\|+(f(x_\delta) -h(x_\delta))\,\,\forall\,\,x \in X,\,x \ne x_\delta.
$$

The function $\bar{h}:x \mapsto h(x) - \displaystyle\frac{\varepsilon}{\delta}\|x-x_\delta\|+(f(x_\delta)-h(x_\delta))$ is concave and Lipschitz continuous on $X$ 
and, in additition, $\bar{h}(x_\delta)=f(x_\delta).$ Hence, the function $\bar{h}$ is ${\mathcal{L}\widehat{C}}$-minorant of $f$ which supports $f$ at the point $x_\delta$ and, consequently, the function $f$ is ${\mathcal{L}\widehat{C}}$-subdifferentiable at the point $x_\delta$.

Furthermore, through the conditions $(i)$ and $(ii)$ the point $x_\delta$ belongs to $({\rm dom}\,f)\cap B_\delta(\bar{x}),$ where $B_\delta(\bar{x}):=\{x \in X \mid \|x_\delta - \bar{x}\| \le \delta\}.$
Since the choice of the point $\bar{x} \in {\rm dom}\,f$ and a real $\delta > 0$ was arbitrary, we conclude that any neighborhood of each point from ${\rm dom}\,f$ contains such points at which the function $f$ is ${\mathcal{L}\widehat{C}}$-subdifferentiable. \hfill $\Box$

\begin{remark}\label{r3.2}
{\rm Theorem \ref{th3.5} extends the well-known Br{\o}ndsted--Rockafellar theorem \cite{BrRock65} on
the existence of the subdifferential for convex (in the classical sense) lower semicontinuous functions to the wider class
of lower semicontinuous functions.}
\end{remark}

In contrast to classically convex functions, even in the finite-dimensional setting, an ${\mathcal{L}\widehat{C}}$-convex function may not be ${\mathcal{L}\widehat{C}}$-subdifferentiable at some interior points of its effective domain.

\begin{example}\label{ex3.3}
The function $f(x) = - \sqrt{|x|}$ is continuous on ${\mathbb{R}}$ and bounded from below by the Lipschitz continuous function $h(x)= -|x|-1$, and, hence, by Theorem \ref{th4.3} $f$ is ${\mathcal{L}\widehat{C}}$-convex. It is easily seen that $f$ is not ${\mathcal{L}\widehat{C}}$-subdifferentiable at the point $x=0$.
\end{example}

\section{${\mathcal{L}\widehat{C}}_\theta$-subgradients and ${\mathcal{L}\widehat{C}}_\theta$-subdifferential of a function.}

\vspace{1mm}

The aim of this section is to introduce notions of the ${\mathcal{L}\widehat{C}}_\theta$-subgradient and the ${\mathcal{L}\widehat{C}}_\theta$-subdifferential of a function $f$ at a point $\bar{x}$. For this we need some additional preliminaries on the space ${\mathcal{L}}(X,\mathbb{R})$ of Lipschitz continuous functions defined on a normed space~$X$.

Recall that all Lipschitz continuous functions we consider are real-valued.

A real-valued function $f:X \mapsto {\mathbb{R}}$ is Lipschitz continuous if and only if
$$
\|f\|_{\mathcal{L}}:= \sup_{x,y \in X,\,x \ne y}\frac{|f(x)-f(y)|}{\|x - y\|} < +\infty.
$$
The value $\|f\|_{\mathcal{L}}$, called \textit{the Lipschitz modulus of} $f$, is the infimum of the set of constants $k \ge 0$ such that
$$
|f({x}) - f(y)| \le k\|x - y\|\,\,\forall\,\,x,y \in X.
$$
holds.

A function $f \mapsto \|f\|_{\mathcal{L}}$ is a semi-norm on the vector space ${\mathcal{L}}(X,\mathbb{R})$, with $\|f  - h\|_{\mathcal{L}} = 0$ if and  only if $f(x) -h(x) \equiv\,\,\text{const}\,\,\forall\,\,x \in X.$

Fix $\bar{x} \in X$ and define for $f \in {\mathcal{L}}(X,\mathbb{R})$ the value
$$
\|f\|_{\mathcal{L},\,\bar{x}}:= |f(\bar{x})| + \|f\|_{\mathcal{L}}.
$$
Then the function $\|\cdot\|_{\mathcal{L},\,\bar{x}}: x \mapsto |f(\bar{x})| + \|f\|_{\mathcal{L}}$ is a norm on ${\mathcal{L}}(X,\mathbb{R})$ with respect to which the normed vector space $({\mathcal{L}}(X,\mathbb{R}),\|\cdot\|_{\mathcal{L},\,\bar{x}})$ is complete.

By ${\mathcal{L}}_{\bar{x}}(X,\mathbb{R})$ we denote the vector subspace in ${\mathcal{L}}(X,\mathbb{R})$ consisting of such functions $f \in {\mathcal{L}}(X,\mathbb{R})$ that $f(\bar{x})= 0.$ Since for each function $f \in {\mathcal{L}}_{\bar{x}}(X,\mathbb{R})$ we have $\|f\|_{\mathcal{L},\,\bar{x}} = \|f\|_{\mathcal{L}}$, the function $\|\cdot\|_{\mathcal{L}}$ is a norm on ${\mathcal{L}}_{\bar{x}}(X,\mathbb{R})$ with respect to which ${\mathcal{L}}_{\bar{x}}(X,\mathbb{R})$ is a complete normed space.

Suppose that a sequence of Lipschitz continuous functions
$\{f_n(\cdot)\}_{n=1}^\infty$ from the Banach space ${\mathcal{L}}_{\bar{x}}(X,\mathbb{R})$ converges to a function $f(\cdot)$ in  the norm $\|\cdot\|    _{\mathcal L}$. Then for any $x \in X$  the sequence of reals  $\{f_n(x)\}_{n=1}^\infty$ converges to $f(x)$ (i.e., the sequence $\{f_n(\cdot)\}_{n=1}^\infty$ converges to $f(\cdot)$ pointwise on $X$). Moreover, in this case the sequence $\{f_n(\cdot)\}_{n=1}^\infty$ converges to $f(\cdot)$ uniformly on each bounded subset of $X$.

Indeed, since the norm space $({\mathcal{L}}_{\bar{x}}(X,\mathbb{R}),\|\cdot\|_{\mathcal{L}})$ is complete, then $f \in {\mathcal{L}}_{\bar{x}}(X,\mathbb{R}).$ Therefore, we get the inequality
$$
|f_n(x)-f(x)| = |(f_n-f)(x) - (f_n-f)(\bar{x})| \le \|f_n-f\|_{{\mathcal{L}}} \|x - \bar{x}\|,
$$
which shows that the sequence of functions $\{f_n(\cdot)\}_{n=1}^\infty$ converges to the function $f(\cdot)$ both pointwise and uniformly on each bounded subset of $X$.

\vspace{3pt}

For proofs of above statements and more details on the space of Lipschitz functions we refer to \cite{Martin,Hein,PalRol}.

\vspace{3pt}

In what follows we will assume that $\bar{x} = \theta$, where $\theta$ is the origin of $X$, and will deal with the Banach spaces $({\mathcal{L}}(X,\mathbb{R}),\,\|f\|_{\mathcal{L},\,\theta})$ and $({\mathcal{L}}_\theta(X,\mathbb{R}),\,\|f\|_{\mathcal{L}}).$

Note, that we can uniquely associate each function $f \in {\mathcal{L}}(X,\mathbb{R})$ and an arbitrary point $(\bar{x},f(\bar{x})) \in X \times {\mathbb R}$ of its graph  with the function $l: x \mapsto f(x + \bar{x}) - f(\bar{x})$ from ${\mathcal{L}}_\theta(X,\mathbb{R})$ such that $f(x) = l(x - \bar{x}) + f(\bar{x})$ for all $x \in X.$ Conversely, each function $l \in {\mathcal{L}}_\theta(X,\mathbb{R})$ and a point $(\bar{x},c) \in X \times {\mathbb R}$
are uniquely associated with the function $f: x \mapsto l(x - \bar{x}) + c$ from ${\mathcal{L}}(X,\mathbb{R})$ such that $f(\bar{x}) = c.$

Below, along with the convex cone ${\mathcal L}\widehat{C}:= {\mathcal L}\widehat{C}(X,{\mathbb{R}})$ of Lipschitz continuous concave functions we will consider its convex subcone ${{\mathcal L}\widehat{C}}_\theta:={{\mathcal L}\widehat{C}}_\theta(X,{\mathbb{R}})$ consisting of such functions $l \in {\mathcal L}\widehat{C}(X,{\mathbb{R}})$ for which the equality  $l(\theta) = 0$ holds. Clearly, ${{\mathcal L}\widehat{C}}_\theta(X,{\mathbb{R}})$ is a subset of the Banach space $({\mathcal{L}}_\theta(X,\mathbb{R}),\,\|\cdot\|_{\mathcal{L}}).$

\begin{definition}\label{d6.1}
A function $l \in {{\mathcal L}\widehat{C}}_\theta(X,{\mathbb{R}})$ is called \textit{an ${{\mathcal L}\widehat{C}}_\theta$-subgradient of a function $f:X \mapsto \overline{\mathbb{R}}$ at a point $\bar{x} \in {\rm dom}\,f$} if
\begin{equation}\label{e6.7}
f(x) - f(\bar{x}) \ge l(x - \bar{x})\,\,\text{for all}\,\,x \in X.
\end{equation}
The set of all ${{\mathcal L}\widehat{C}}_\theta$-subgradients of a function $f:X \mapsto \overline{\mathbb{R}}$ at a point $\bar{x} \in {\rm dom}\,f$  denoted by $D_{{{\mathcal L}\widehat{C}}_\theta}f(\bar{x})$ is called \textit{the ${{{\mathcal L}\widehat{C}}_\theta}$-subdifferential of $f$ at $\bar{x}$}.

\vspace{3pt}

An ${{\mathcal L}\widehat{C}}_\theta$-subgradient $l \in D_{{{\mathcal L}\widehat{C}}_\theta}f(\bar{x})$ that is a maximal (with respect to the pointwise ordering) element of $D_{{{\mathcal L}\widehat{C}}_\theta}f(\bar{x})$ is called \textit{a maximal ${{\mathcal L}\widehat{C}}_\theta$-subgradient of $f$ at ${\bar{x}}$}.
The set of all maximal ${{\mathcal L}\widehat{C}}_\theta$-subgradients of a function $f:X \mapsto \overline{\mathbb{R}}$ at a point $\bar{x} \in {\rm dom}\,f$,  denoted by $\partial_{{{\mathcal L}\widehat{C}}_\theta}f(\bar{x})$, is called \textit{thin ${{{\mathcal L}\widehat{C}}_\theta}$-subdifferential of $f$ at $\bar{x}.$}
\end{definition}

\begin{proposition}\label{pr6.2}
A function $l \in {{\mathcal L}\widehat{C}}_\theta(X,{\mathbb{R}})$ is an ${{\mathcal L}\widehat{C}}_\theta$-subgradient of a function $f:X \mapsto \overline{\mathbb{R}}$ at a point $\bar{x} \in {\rm dom}\,f$ if and only if the function $h: x \mapsto l(x-\bar{x}) + f(\bar{x})$ belongs to $S^-({\mathcal{L}\widehat{C}},\,f,\,\bar{x})$, while  a function $l \in {{\mathcal L}\widehat{C}}_\theta(X,{\mathbb{R}})$ is  \textit{a maximal ${{\mathcal L}\widehat{C}}_\theta$-subgradient of a function $f:X \mapsto \overline{\mathbb{R}}$ at a point $\bar{x} \in {\rm dom}\,f$} if and only if  the function $h: x \mapsto l(x-\bar{x}) + f(\bar{x})$ belongs to $S^-_{\text{max}}({\mathcal{L}\widehat{C}},\,f,\,\bar{x})$.

\end{proposition}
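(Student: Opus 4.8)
The plan is to set up an explicit order isomorphism between the set $D_{{{\mathcal L}\widehat{C}}_\theta}f(\bar{x})$ of ${{\mathcal L}\widehat{C}}_\theta$-subgradients of $f$ at $\bar{x}$ and the set $S^-({\mathcal{L}\widehat{C}},f,\bar{x})$ of Lipschitz continuous concave minorants of $f$ that support it at $\bar{x}$, and then to read off both halves of the proposition from this correspondence. Since $\bar{x}\in{\rm dom}\,f$, the value $f(\bar{x})$ is finite, so all the affine shifts below are legitimate.

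First I would introduce the two maps $l\mapsto h_l$, where $h_l\colon x\mapsto l(x-\bar{x})+f(\bar{x})$, and $h\mapsto l_h$, where $l_h\colon y\mapsto h(y+\bar{x})-f(\bar{x})$ --- exactly the bijection between ${\mathcal{L}}(X,\mathbb{R})$ and ${\mathcal{L}}_\theta(X,\mathbb{R})$ recorded just before Definition \ref{d6.1}, now restricted to the concave cone. One checks that $h_l$ is again concave and Lipschitz continuous (precomposition with the translation $x\mapsto x-\bar{x}$ and addition of a constant preserve both properties, with unchanged Lipschitz modulus), that $h_l(\bar{x})=l(\theta)+f(\bar{x})=f(\bar{x})$ because $l(\theta)=0$, and symmetrically that $l_h\in{{\mathcal L}\widehat{C}}_\theta(X,{\mathbb{R}})$ whenever $h\in{\mathcal L}\widehat{C}$ with $h(\bar{x})=f(\bar{x})$, since then $l_h(\theta)=h(\bar{x})-f(\bar{x})=0$; the two maps are mutually inverse.

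For the first assertion I would then simply rewrite the defining inequality \eqref{e6.7}: the condition ``$f(x)-f(\bar{x})\ge l(x-\bar{x})$ for all $x\in X$'' is verbatim ``$h_l\le f$'', so together with $h_l\in{\mathcal L}\widehat{C}$ and $h_l(\bar{x})=f(\bar{x})$ it says precisely that $h_l\in S^-({\mathcal{L}\widehat{C}},f,\bar{x})$; conversely, from $h\in S^-({\mathcal{L}\widehat{C}},f,\bar{x})$ one reads off $l_h\in{{\mathcal L}\widehat{C}}_\theta(X,{\mathbb{R}})$ together with $f(x)-f(\bar{x})\ge l_h(x-\bar{x})$ for all $x$, i.e. $l_h\in D_{{{\mathcal L}\widehat{C}}_\theta}f(\bar{x})$. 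Hence $l\mapsto h_l$ restricts to a bijection of $D_{{{\mathcal L}\widehat{C}}_\theta}f(\bar{x})$ onto $S^-({\mathcal{L}\widehat{C}},f,\bar{x})$, which is the first claim.

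For the second assertion I would note that this bijection is order preserving: $l_1\le l_2$ on $X$ iff $l_1(x-\bar{x})\le l_2(x-\bar{x})$ for all $x$ iff $h_{l_1}\le h_{l_2}$ (the common summand $f(\bar{x})$ is irrelevant), so maximal elements correspond to maximal elements. It then only remains to identify ``maximal in $S^-({\mathcal{L}\widehat{C}},f,\bar{x})$'' with membership in $S^-_{\text{max}}({\mathcal{L}\widehat{C}},f,\bar{x})=S^-_{\text{max}}({\mathcal{L}\widehat{C}},f)\cap S^-({\mathcal{L}\widehat{C}},f,\bar{x})$: one inclusion is trivial, and for the other, if $h\in S^-({\mathcal{L}\widehat{C}},f,\bar{x})$ is maximal in that set and $h'\in S^-({\mathcal{L}\widehat{C}},f)$ satisfies $h\le h'$, then $f(\bar{x})=h(\bar{x})\le h'(\bar{x})\le f(\bar{x})$ forces $h'\in S^-({\mathcal{L}\widehat{C}},f,\bar{x})$, whence $h'=h$, so $h\in S^-_{\text{max}}({\mathcal{L}\widehat{C}},f)$ too. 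I do not expect any genuine obstacle here: the argument is entirely the graph-translation bookkeeping together with this last elementary remark, that a concave minorant dominating one that is active at $\bar{x}$ must itself be active at $\bar{x}$.
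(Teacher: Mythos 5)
Your proof is correct and follows essentially the same route as the paper: the paper likewise reduces the first assertion to inequality \eqref{e6.7} and the second to the order-preserving one-to-one correspondence $l \leftrightarrow h(\cdot)=l(\cdot-\bar{x})+f(\bar{x})$ between $D_{{{\mathcal L}\widehat{C}}_\theta}f(\bar{x})$ and $S^-({\mathcal{L}\widehat{C}},f,\bar{x})$. The only difference is that you spell out the small point the paper leaves implicit, namely that maximality within $S^-({\mathcal{L}\widehat{C}},f,\bar{x})$ coincides with membership in $S^-_{\mathrm{max}}({\mathcal{L}\widehat{C}},f)\cap S^-({\mathcal{L}\widehat{C}},f,\bar{x})$ because any minorant dominating one that is active at $\bar{x}$ is itself active there; this is a welcome clarification, not a different method.
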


\begin{proof}
The first assertion of the proposition follows immediately from the inequality \eqref{e6.7}.

Observe that due to the definitions of $\partial_{{{\mathcal L}\widehat{C}}_\theta}f(\bar{x})$ and $D_{{{\mathcal L}\widehat{C}}_\theta}f(\bar{x})$ there are the one-to-one correspondences between $\partial_{{{\mathcal L}\widehat{C}}_\theta}f(\bar{x})$ and $S^-_{\text{max}}({\mathcal{L}\widehat{C}},\,f,\,\bar{x})$ as well as between $D_{{{\mathcal L}\widehat{C}}_\theta}f(\bar{x})$ and $S^-({\mathcal{L}\widehat{C}},\,f,\,\bar{x})$ preserving order relations between their elements. The second assertion follows from this observation.
\end{proof}

\begin{example}\label{ex6.2}
{\rm For the Lipschitz continuous function $f:{\mathbb R}^2 \to {\mathbb R}$ such that $f(x,y)=|x|-|y|$, its thin ${{\mathcal L}{\widehat{C}}}_{\theta}$-subdifferential at the point $(0,0)$ is equal to
$$\partial_{{\mathcal L}{\widehat{C}}_{\theta}}f(0,0) = \{h:{\mathbb{R}}^2 \mapsto {\mathbb{R}} \mid h_\alpha:(x,y) \mapsto \alpha x -|y|,\, -1\le \alpha \le 1\}.$$ In particular,
the functions $h_{-1}(x,y) := -x - |y|$, $h_{0}(x,y) :=  - |y|$ and $h_{1}(x,y) := x - |y|$
are maximal ${{\mathcal L}{\widehat{C}}}_{\theta}$-subgradients of the function $f$ at the point $(0,0)$.

The ${{\mathcal L}{\widehat{C}}_{\theta}}$-subdifferential
 $D_{{\mathcal L}{\widehat{C}}_{\theta}}f(0,0)$ consists of all Lipschitz concave functions $h:{\mathbb{R}}^2 \mapsto {\mathbb{R}}$ such that $h(0,0) =0$ and $h(x,y) \le |x| -|y|$ for all $(x,y) \in {\mathbb{R}}^2.$
}
\end{example}

\begin{proposition}\label{pr6.3}
Let $f \in {\overline{\mathbb{R}}^X}$ and let $\bar{x} \in {\rm dom}\,f$.
\begin{itemize}
\item[$(i)$]
The ${{{\mathcal L}\widehat{C}}_\theta}$-subdifferential of $f$ at  $\bar{x}$, $D_{{{\mathcal L}\widehat{C}}_\theta}f(\bar{x})$, is a convex and closed subset of the Banach space $({\mathcal{L}}_\theta(X,\mathbb{R}),\,\|\cdot\|_{\mathcal{L}}).$
\item[$(ii)$]
Let a normed vector space $X$ be complete and let a function $f:X \mapsto \overline{\mathbb{R}}$ be ${{\mathcal L}\widehat{C}}$-convex. Then the set of those $x \in X$ 
at which $\partial_{{{\mathcal L}\widehat{C}}_\theta}f({x})$ $($and, consequently, $D_{{{\mathcal L}\widehat{C}}_\theta}f({x})$$)$ is nonempty, is dense in ${\rm dom}\,f$.
\item[$(iii)$]
 Let a function $f:X \mapsto \overline{\mathbb{R}}$ be lower semicontinuous and convex in classical sense. If  $\partial_{{{\mathcal L}\widehat{C}}_\theta}f(\bar{x}) \ne \varnothing$ at a point $\bar{x} \in {\rm dom}\,f$ then every maximal ${{\mathcal L}\widehat{C}}_\theta$-subgradient of the function $f$  at the point $\bar{x}$ is, actually, a continuous linear function, and, consequently,
$$
\partial_{{{\mathcal L}\widehat{C}}_\theta}f(\bar{x}) = \{x^* \in X^* \mid f(x)-f(\bar{x}) \ge x^*(x-\bar{x})\,\,\forall\,\,x \in X\} =: \partial f(\bar{x}),
$$
where $\partial f(\bar{x})$ is the Fenchel--Rockafellar subdifferential of $f$ at a point $\bar{x} \in X.$
\end{itemize}
\end{proposition}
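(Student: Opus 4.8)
The plan is to prove the three parts in order, using the one-to-one order-preserving correspondence between $D_{{{\mathcal L}\widehat{C}}_\theta}f(\bar{x})$ and $S^-({\mathcal{L}\widehat{C}},f,\bar{x})$ (and between $\partial_{{{\mathcal L}\widehat{C}}_\theta}f(\bar{x})$ and $S^-_{\text{max}}({\mathcal{L}\widehat{C}},f,\bar{x})$) established in Proposition~\ref{pr6.2}, so that each statement becomes a statement about support minorants that has already been treated or nearly treated in Section~3.

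For part $(i)$, convexity is immediate: if $l_1,l_2 \in D_{{{\mathcal L}\widehat{C}}_\theta}f(\bar{x})$ and $\lambda \in [0,1]$, then $\lambda l_1 + (1-\lambda)l_2$ is again Lipschitz continuous, concave, vanishes at $\theta$, and satisfies the subgradient inequality \eqref{e6.7} by taking convex combinations of the two inequalities. For closedness, suppose $l_n \to l$ in $({\mathcal{L}}_\theta(X,\mathbb{R}),\,\|\cdot\|_{\mathcal{L}})$ with $l_n \in D_{{{\mathcal L}\widehat{C}}_\theta}f(\bar{x})$. As noted in the preliminaries of Section~4, norm convergence in ${\mathcal{L}}_\theta(X,\mathbb{R})$ implies pointwise convergence, so $l(x) = \lim_n l_n(x)$ for every $x$; hence $l$ is concave (a pointwise limit of concave functions), $l(\theta)=0$, and the inequality $f(x)-f(\bar{x}) \ge l_n(x-\bar{x})$ passes to the limit to give $f(x)-f(\bar{x}) \ge l(x-\bar{x})$ for all $x \in X$. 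Thus $l \in D_{{{\mathcal L}\widehat{C}}_\theta}f(\bar{x})$.

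Part $(ii)$ is essentially a restatement of Theorem~\ref{th3.5}. Indeed, by Corollary~\ref{cor5.1a} and Proposition~\ref{pr6.2}, for $x \in {\rm dom}\,f$ one has $\partial_{{{\mathcal L}\widehat{C}}_\theta}f(x) \ne \varnothing$ if and only if $S^-_{\text{max}}({\mathcal{L}\widehat{C}},f,x) \ne \varnothing$ if and only if $f$ is ${\mathcal{L}\widehat{C}}$-subdifferentiable at $x$; and the latter set of points is dense in ${\rm dom}\,f$ by Theorem~\ref{th3.5} (using completeness of $X$). The inclusion $\partial_{{{\mathcal L}\widehat{C}}_\theta}f(x) \subset D_{{{\mathcal L}\widehat{C}}_\theta}f(x)$ gives the parenthetical remark.

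For part $(iii)$, let $\bar{x} \in {\rm dom}\,f$ with $\partial_{{{\mathcal L}\widehat{C}}_\theta}f(\bar{x}) \ne \varnothing$ and take a maximal ${{\mathcal L}\widehat{C}}_\theta$-subgradient $l$. By Proposition~\ref{pr6.2} the function $h:x \mapsto l(x-\bar{x})+f(\bar{x})$ lies in $S^-_{\text{max}}({\mathcal{L}\widehat{C}},f,\bar{x})$. Since $f$ is lower semicontinuous and convex, Theorem~\ref{th4.4} tells us every element of $S^-_{\text{max}}({\mathcal{L}\widehat{C}},f)$ is affine; in particular $h$ is a continuous affine function with $h(\bar{x})=f(\bar{x})$, so $l(x-\bar{x}) = h(x)-f(\bar{x})$ is continuous affine and $l(\theta)=0$ forces $l$ to be a continuous linear functional $x^* \in X^*$. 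The subgradient inequality \eqref{e6.7} then reads $f(x)-f(\bar{x}) \ge x^*(x-\bar{x})$ for all $x$, i.e. $x^* \in \partial f(\bar{x})$. Conversely, any $x^* \in \partial f(\bar{x})$ is linear, continuous, hence Lipschitz and concave, vanishes at $\theta$, and satisfies \eqref{e6.7}, so $x^* \in D_{{{\mathcal L}\widehat{C}}_\theta}f(\bar{x})$; and by Proposition~\ref{pr5.2} a continuous affine minorant supporting $f$ at $\bar{x}$ is automatically maximal in $S^-({\mathcal{L}\widehat{C}},f,\bar{x})$, so in fact $x^* \in \partial_{{{\mathcal L}\widehat{C}}_\theta}f(\bar{x})$. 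This establishes both inclusions and hence the claimed equality.

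The only mildly delicate point is the closedness in part $(i)$: one must invoke the fact — recorded in the Section~4 preliminaries — that $\|\cdot\|_{\mathcal{L}}$-convergence in ${\mathcal{L}}_\theta(X,\mathbb{R})$ yields pointwise convergence, so that concavity and the defining inequality are preserved in the limit. Everything else is a direct translation through Propositions~\ref{pr6.2} and~\ref{pr5.2} and Theorems~\ref{th3.5} and~\ref{th4.4}.
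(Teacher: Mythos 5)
Your proposal is correct and follows essentially the same route as the paper, which simply cites inequality \eqref{e6.7} for $(i)$, Theorem \ref{th3.5} for $(ii)$, and Theorem \ref{th4.4} for $(iii)$; you merely spell out the details (pointwise convergence from $\|\cdot\|_{\mathcal{L}}$-convergence for closedness, the correspondence of Proposition \ref{pr6.2} for the translation). The one point where you go beyond the paper's terse argument is the converse inclusion $\partial f(\bar{x}) \subseteq \partial_{{{\mathcal L}\widehat{C}}_\theta}f(\bar{x})$ in $(iii)$, which you justify correctly via Proposition \ref{pr5.2}, a step the paper leaves implicit.
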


\begin{proof}
The claim $(i)$ is easily verified from the inequality \eqref{e6.7}.

The claim $(ii)$ is actually another formulation of Theorem \ref{th3.5}.

At last, the claim $(iii)$ follows from Theorem \ref{th4.4}.
\end{proof}

\begin{proposition}\label{pr6.4}
If for a function $f \in {\overline{\mathbb{R}}^X}$ there exists a point $\bar{x} \in {\rm dom}\,f$ such that
$$
X^*\bigcap\partial_{{{\mathcal L}{\widehat{C}}_\theta}}f(\bar{x}) \ne \varnothing\,\,\text{and}\,\,X^*\bigcap\partial_{{{\mathcal L}{\widehat{C}}_\theta}}(-f)(\bar{x}) \ne \varnothing,
$$
then $f$ is a continuous affine function and, in this case,
$$
\partial_{{{\mathcal L}{\widehat{C}}_\theta}}f(\bar{x}) = -\partial_{{{\mathcal L}{\widehat{C}}_\theta}}(-f)(\bar{x}) = \{x^*\}
$$
where $x^* \in X^*$ is a continuous linear functional such that $f(x)\!=\!x^*(x\! -\! \bar{x})\!+\!f(\bar{x}),\,x \in X.$
\end{proposition}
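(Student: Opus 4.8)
The plan is to use the two continuous linear functionals supplied by the hypothesis to trap $f$ between two affine functions that turn out to coincide, so that $f$ itself must be affine; once that is established, the description of both thin subdifferentials is an immediate consequence of Proposition~\ref{pr6.3}(iii).

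Concretely, I would fix $x^*\in X^*\cap\partial_{{\mathcal L}\widehat{C}_\theta}f(\bar{x})$ and $y^*\in X^*\cap\partial_{{\mathcal L}\widehat{C}_\theta}(-f)(\bar{x})$. Since a maximal ${\mathcal L}\widehat{C}_\theta$-subgradient is in particular an ${\mathcal L}\widehat{C}_\theta$-subgradient, the defining inequality \eqref{e6.7}, applied to $f$ and to $-f$, yields
$$x^*(x-\bar{x})\ \le\ f(x)-f(\bar{x})\ \le\ -\,y^*(x-\bar{x})\qquad\text{for all } x\in X .$$
Both outer terms are finite because $\bar{x}\in{\rm dom}\,f={\rm dom}(-f)$, so $f$ is automatically real-valued. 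Adding the outer inequalities gives $(x^*+y^*)(x-\bar{x})\le 0$ for every $x\in X$; as $x-\bar{x}$ runs over all of $X$ and $x^*+y^*$ is linear, this forces $x^*+y^*=0$, i.e.\ $y^*=-x^*$. Feeding this back into the displayed chain collapses it to $f(x)-f(\bar{x})=x^*(x-\bar{x})$ for all $x$, so $f$ — and hence $-f$ — is a continuous affine function of the asserted form.

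It remains to identify the subdifferentials. As $f$ is affine it is lower semicontinuous and convex in the classical sense, and $\partial_{{\mathcal L}\widehat{C}_\theta}f(\bar{x})\ne\varnothing$ (it contains $x^*$), so Proposition~\ref{pr6.3}(iii) applies and gives $\partial_{{\mathcal L}\widehat{C}_\theta}f(\bar{x})=\partial f(\bar{x})$. For the affine function $f(x)=x^*(x-\bar{x})+f(\bar{x})$ the Fenchel--Rockafellar subdifferential at $\bar{x}$ is readily seen to be $\{x^*\}$, whence $\partial_{{\mathcal L}\widehat{C}_\theta}f(\bar{x})=\{x^*\}$; running the same reasoning for $-f$ gives $\partial_{{\mathcal L}\widehat{C}_\theta}(-f)(\bar{x})=\{-x^*\}$, and the two displayed equalities of the proposition follow. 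The argument is elementary throughout; the only place deserving a word of care is the inference, from $(x^*+y^*)(x-\bar{x})\le 0$ holding for all $x\in X$, that $x^*+y^*=0$, together with the remark that the hypothesis $\bar{x}\in{\rm dom}\,f$ is precisely what keeps both sides of the sandwich finite, so that the two inequalities may legitimately be added. No genuine obstacle is anticipated.
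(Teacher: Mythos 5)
Your proposal is correct and follows essentially the same route as the paper: sandwich $f(x)-f(\bar{x})$ between the two affine functions coming from the linear sub/supergradients, deduce that the two functionals coincide up to sign so the sandwich collapses and $f$ is affine, then identify the thin subdifferentials. The only cosmetic difference is that you finish by invoking Proposition~\ref{pr6.3}(iii) and computing the Fenchel--Rockafellar subdifferential of an affine function, whereas the paper reads the singleton description directly off the collapsed inequality; both are valid and equivalent in substance.
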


\begin{proof}
Let $x^*_1 \in X^*\bigcap\partial_{{{\mathcal L}{\widehat{C}}_\theta}}f(\bar{x})$ and $-x^*_2 \in X^*\bigcap\partial_{{{\mathcal L}{\widehat{C}}_\theta}}(-f)(\bar{x})$. Then the following inequalities hold
\begin{equation}\label{e6.8a}
x^*_1(x - \bar{x})+ f(\bar{x}) \le f(x) \le x^*_2(x - \bar{x})+ f(\bar{x})\,\,\forall\,\,x \in X,
\end{equation}
from which we conclude that
$x^*_1(x) \le x^*_2(x)\,\,\forall\,\,x \in X.$
It implies $x_1^* = x_2^* =: x^*$. Due to this fact we get from \eqref{e6.8a} that  $f(x) =x^*(x-\bar{x}) + f(\bar{x})\,\,\forall\,\,x \in X$ and, hence, $\partial_{{{\mathcal L}{\widehat{C}}_\theta}}f(\bar{x}) = -\partial_{{{\mathcal L}{\widehat{C}}_\theta}}(-f)(\bar{x}) = \{x^*\}$. \end{proof}

\section{ Abstract ${\mathcal{L}\widecheck{C}}$-concave functions and ${\mathcal{L}\widecheck{C}}_\theta$-superdifferentiability.}

\vspace{1mm}

Using the set ${\mathcal{L}\widecheck{C}}:={\mathcal{L}\widecheck{C}}(X,{\mathbb{R}})$ of Lipschitz continuous and convex in classical sense functions as the set of elementary abstract concave functions and the operation of pointwise infimum as a tool for producing new functions from subsets of ${\mathcal{L}\widecheck{C}}$ we can define the class of ${\mathcal{L}\widecheck{C}}$-concave functions. We briefly formulate the main definitions and some results related to the abstract ${\mathcal{L}}\widecheck{C}$-concavity of functions.

A function $f:X \mapsto \overline{\mathbb{R}}$ is called \textit{abstract ${\mathcal{L}}\widecheck{C}$-concave} if the set $S^+({\mathcal{L}}\widecheck{C},\,f) :=\{g \in {\mathcal{L}}\widecheck{C} \mid g \ge f\}$ consisting of ${\mathcal{L}}\widecheck{C}$-majorant of the function $f$ is non-empty and
$$
f(x) = \inf\{g(x) \mid g \in S^+({\mathcal{L}}\widecheck{C},\,f)\}\,\, \text{for all}\,\, x \in X,
$$
or, equivalently, if in ${\mathcal{L}}\widecheck{C}$  there exists such non-empty subset $\mathcal{F}$ that
$$
f(x) = \inf\{g(x) \mid g \in {\mathcal{F}}\}\,\, \text{for all}\,\, x \in X.
$$

Since ${\mathcal{L}}\widecheck{C} = -{\mathcal{L}}\widehat{C}$, it is easily seen that a function $f$ is ${\mathcal{L}}\widecheck{C}$-concave if and only if $-f$ is ${\mathcal{L}}\widehat{C}$-convex. Thus, like classical notions of convexity and concavity of functions abstract ${\mathcal{L}}\widehat{C}$-convexity an abstract ${\mathcal{L}}\widecheck{C}$-concavity are symmetric notions too.

Due to this symmetry we conclude from Theorem \ref{th4.3} that a function $f$ is ${\mathcal{L}}\widecheck{C}$-concave if and only if it is upper semicontinuous and bounded from above by a Lipschitz continuous function.

Hence, the class of functions which are both ${\mathcal{L}}\widehat{C}$-convex and ${\mathcal{L}}\widecheck{C}$-concave coincides with the class of continuous functions bounded from below and from above by Lipschitz continuous functions. In particular, Lipschitz continuous functions are both ${\mathcal{L}}\widehat{C}$-convex and ${\mathcal{L}}\widecheck{C}$-concave.

We say that a function $f:X \mapsto \overline{\mathbb{R}}$ is \textit{${\mathcal{L}}\widecheck{C}$-superdifferentiable at a point $\bar{x} \in {\rm dom}\,f$} if there exists such its ${\mathcal{L}}\widecheck{C}$-majorant $g \in S^+({\mathcal{L}}\widecheck{C},f)$ that $g(\bar{x}) = f(\bar{x}).$

It is easily seen that a function $f$ is ${\mathcal{L}}\widecheck{C}$-superdifferentiable at a point $\bar{x} \in {\rm dom}\,f$ if $-f$ is ${\mathcal{L}}\widehat{C}$-subdifferentiable at  $\bar{x}$, so the notion of ${\mathcal{L}}\widecheck{C}$-superdifferentiability is also symmetric to the notion of ${\mathcal{L}}\widehat{C}$-subdifferentiability.

A function $l \in {\mathcal{L}}\widecheck{C}_\theta := \{l \in {\mathcal{L}}\widecheck{C} \mid l(\theta) = 0\}$ is called \textit{an ${{\mathcal{L}}\widecheck{C}}_\theta$-supergradient of a function $f:X \mapsto \overline{\mathbb{R}}$ at a point $\bar{x} \in {\rm dom}\,f$} if the function $g(x):=l(x - \bar{x}) + f(\bar{x})\,\,\forall\,\,x \in X$ belongs to $S^+({\mathcal{L}}\widecheck{C},f),$  i.e., if
$$
f(x) - f(\bar{x}) \le l(x-\bar{x})\,\,\text{for all}\,\,x \in X.
$$

The set of all ${{\mathcal{L}}\widecheck{C}}_\theta$-supergradients of a function $f$ at a point $\bar{x} \in {\rm dom}\,f$ is called \textit{the ${{{\mathcal{L}}\widecheck{C}}_\theta}$-superdifferential of $f$ at $\bar{x}$} and is denoted by $D^+_{{{\mathcal{L}}\widecheck{C}}_\theta}f(\bar{x}),$ while the subset of minimal (with respect to pointwise ordering) elements of $D^+_{{{\mathcal{L}}\widecheck{C}}_\theta}f(\bar{x})$  will be called \textit{the thin ${{\mathcal{L}}\widecheck{C}}_\theta$-superdifferential of $f$ at $\bar{x}$} and will be denoted by $\partial^+_{{{\mathcal{L}}\widecheck{C}}_\theta}f(\bar{x}).$

The equalities
$$
D^+_{{{\mathcal{L}}\widecheck{C}}_\theta}f(\bar{x}) = - D_{{{\mathcal{L}}\widehat{C}}_\theta}(-f)(\bar{x})\,\,\text{and}\,\,\partial^+_{{{\mathcal{L}}\widecheck{C}}_\theta}f(\bar{x}) = - \partial_{{{\mathcal{L}}\widehat{C}}_\theta}(-f)(\bar{x}).
$$
show that there is the symmetry between (thin) ${{{\mathcal{L}}\widehat{C}}_\theta}$-subdifferentials and (thin) ${{{\mathcal{L}}\widecheck{C}}_\theta}$-superdifferentials.

\begin{example}\label{ex7.1}
{\rm Consider the Lipschitz continuous function $f:{\mathbb R}^2 \to {\mathbb R}$ such that $f(x,y)=|x|-|y|$ (see Example \ref{ex6.2}). Its thin ${{\mathcal L}{\widecheck{C}}}_{\theta}$-superdifferential at the point $(0,0)$ is equal to
$$\partial^+_{{\mathcal L}{\widecheck{C}}_{\theta}}f(0,0) = \{g:{\mathbb{R}}^2 \mapsto {\mathbb{R}} \mid g_\alpha:(x,y) \mapsto |x| - \alpha y,\, -1\le \alpha \le 1\}.$$

\smallskip

Note that for this function both the thin ${{\mathcal L}{\widehat{C}}}_{\theta}$-subdifferential and the thin ${{\mathcal L}{\widecheck{C}}}_{\theta}$-superdifferential at the point $(0,0)$ are non-empty. At the same time, both
the Fr\'{e}chet subdifferential and the Fr\'{e}chet superdifferential of $f$ at $(0,0)$ are empty. In general, as it  is known  \cite{Kruger}, for any function $f$ and a point $\bar{x} \in {\rm dom}\,f$ such that $f$ is not Fr\'{e}chet differentiable at $\bar{x}$ only one of the sets, either the Fr\'{e}chet subdifferential or the Fr\'{e}chet superdifferential of $f$ at $\bar{x}$, can be non-empty.
}
\end{example}

\section{Simple calculus rules for ${\mathcal{L}\widehat{C}}_\theta$-subdifferentials and ${\mathcal{L}\widehat{C}}_\theta$-subdifferential conditions for points of global extremum.}

\vspace{1mm}

The following simple rules calculus for ${\mathcal{L}\widehat{C}}_\theta$-subdifferentials follow immediately from their definitions and properties

\begin{proposition}\label{pr8.1}
Let $f: X \mapsto \overline{\mathbb{R}}$ and $-f: X \mapsto \overline{\mathbb{R}}$ be ${{\mathcal L}{\widehat{C}}}$-subdifferentiable at a point $\bar{x} \in {\rm dom}\,f.$  Then
$$D_{{\mathcal L}{\widehat{C}}_\theta}(\lambda f)(\bar{x}) = \begin{cases} \lambda D_{{\mathcal L}{\widehat{C}}_\theta}f(\bar{x}), & \text{when $\lambda > 0,$}\\ \lambda D^+_{{\mathcal L}{\widecheck{C}}_\theta}f(\bar{x}), & \text{when $\lambda < 0$};\end{cases}$$
and
$$\partial_{{\mathcal L}{\widehat{C}}_\theta}(\lambda f)(\bar{x}) = \begin{cases} \lambda \partial_{{\mathcal L}{\widehat{C}}_\theta}f(\bar{x}), & \text{when $\lambda > 0,$}\\ \lambda \partial^+_{{\mathcal L}{\widecheck{C}}_\theta}f(\bar{x}), & \text{when $\lambda < 0$};\end{cases}$$
\end{proposition}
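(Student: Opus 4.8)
The plan is to derive both formulas directly from Definition \ref{d6.1} together with the symmetry relation $D^+_{{{\mathcal{L}}\widecheck{C}}_\theta}f(\bar{x}) = - D_{{{\mathcal{L}}\widehat{C}}_\theta}(-f)(\bar{x})$ established in Section 5, proceeding case by case on the sign of $\lambda$. The two displayed claims are parallel — one for the full ($D$) subdifferential, one for the thin ($\partial$) subdifferential — so I would handle $D$ first in detail and then observe that the argument for $\partial$ is identical once one notes that the bijection of Proposition \ref{pr6.2} preserves the pointwise order, hence carries maximal elements to maximal elements and commutes with the manipulations below.

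First, take $\lambda > 0$. By Definition \ref{d6.1}, $l \in D_{{\mathcal L}{\widehat{C}}_\theta}(\lambda f)(\bar{x})$ means $l \in {{\mathcal L}\widehat{C}}_\theta(X,{\mathbb{R}})$ and $(\lambda f)(x) - (\lambda f)(\bar{x}) \ge l(x-\bar{x})$ for all $x \in X$. Dividing by $\lambda > 0$ this is equivalent to $f(x) - f(\bar{x}) \ge \tfrac{1}{\lambda} l(x-\bar{x})$ for all $x$, i.e. to $\tfrac{1}{\lambda} l \in D_{{\mathcal L}{\widehat{C}}_\theta}f(\bar{x})$; here one also uses that $l \mapsto \tfrac{1}{\lambda} l$ is a bijection of the convex cone ${{\mathcal L}\widehat{C}}_\theta(X,{\mathbb{R}})$ onto itself (a positive multiple of a Lipschitz concave function vanishing at $\theta$ has the same properties). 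Hence $D_{{\mathcal L}{\widehat{C}}_\theta}(\lambda f)(\bar{x}) = \lambda D_{{\mathcal L}{\widehat{C}}_\theta}f(\bar{x})$, which gives the first line. Next, take $\lambda < 0$ and write $\lambda = -\mu$ with $\mu > 0$. Then $l \in D_{{\mathcal L}{\widehat{C}}_\theta}(\lambda f)(\bar{x})$ says $-\mu f(x) + \mu f(\bar{x}) \ge l(x-\bar{x})$, equivalently $(-f)(x) - (-f)(\bar{x}) \ge \tfrac{1}{\mu} l(x-\bar{x})$, i.e. $\tfrac{1}{\mu} l \in D_{{\mathcal L}{\widehat{C}}_\theta}(-f)(\bar{x})$, so $D_{{\mathcal L}{\widehat{C}}_\theta}(\lambda f)(\bar{x}) = \mu D_{{\mathcal L}{\widehat{C}}_\theta}(-f)(\bar{x}) = -\lambda D_{{\mathcal L}{\widehat{C}}_\theta}(-f)(\bar{x}) = \lambda\bigl(-D_{{\mathcal L}{\widehat{C}}_\theta}(-f)(\bar{x})\bigr) = \lambda D^+_{{\mathcal L}{\widecheck{C}}_\theta}f(\bar{x})$, using the Section 5 identity in the last step. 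This is the second line. (Throughout, the hypothesis that both $f$ and $-f$ are ${{\mathcal L}{\widehat{C}}}$-subdifferentiable at $\bar{x}$ is what guarantees that the relevant sets are nonempty so the equalities are not vacuous; for $\lambda > 0$ only subdifferentiability of $f$ is actually used, for $\lambda < 0$ only that of $-f$.)

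For the second display I repeat the same two computations with $\partial$ in place of $D$. The only extra point to check is that the scaling bijection $l \mapsto \tfrac{1}{|\lambda|} l$ on ${{\mathcal L}\widehat{C}}_\theta(X,{\mathbb{R}})$ is order-preserving, so it sends a maximal element of $D_{{\mathcal L}{\widehat{C}}_\theta}f(\bar{x})$ to a maximal element of $D_{{\mathcal L}{\widehat{C}}_\theta}(\lambda f)(\bar{x})$ and conversely; combined with the order-preserving one-to-one correspondence of Proposition \ref{pr6.2} between ${{\mathcal L}\widehat{C}}_\theta$-subgradients and elements of $S^-({\mathcal{L}\widehat{C}},f,\bar{x})$, maximality is respected at every stage. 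For $\lambda < 0$ one again invokes the identity $\partial^+_{{{\mathcal{L}}\widecheck{C}}_\theta}f(\bar{x}) = - \partial_{{{\mathcal{L}}\widehat{C}}_\theta}(-f)(\bar{x})$ from Section 5 to rewrite $\mu\,\partial_{{\mathcal L}{\widehat{C}}_\theta}(-f)(\bar{x})$ as $\lambda\,\partial^+_{{\mathcal L}{\widecheck{C}}_\theta}f(\bar{x})$.

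I do not anticipate a genuine obstacle: this is a bookkeeping argument and the only thing that requires a moment's care is keeping the signs straight in the $\lambda < 0$ case and being explicit that negation reverses the pointwise order (so "maximal subgradient of $-f$" becomes, after negating, exactly "minimal supergradient of $f$", which is what $\partial^+$ denotes). Accordingly the write-up will emphasize the $\lambda<0$ reduction and treat $\lambda>0$ as immediate.
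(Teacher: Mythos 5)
Your proposal is correct and follows exactly the route the paper intends: the paper gives no written proof, stating only that the formulas ``follow immediately from their definitions and properties,'' and your case analysis on the sign of $\lambda$ combined with the Section~5 identities $D^+_{{{\mathcal{L}}\widecheck{C}}_\theta}f(\bar{x}) = -D_{{{\mathcal{L}}\widehat{C}}_\theta}(-f)(\bar{x})$ and $\partial^+_{{{\mathcal{L}}\widecheck{C}}_\theta}f(\bar{x}) = -\partial_{{{\mathcal{L}}\widehat{C}}_\theta}(-f)(\bar{x})$ is precisely that intended argument, including the correct observation that positive scaling is an order isomorphism (so maximality is preserved) and that the order reversal under negation is already absorbed into the quoted symmetry identities.
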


\begin{proposition}\label{pr8.2}
Let $f_1,f_2:X \mapsto {\overline{\mathbb{R}}}$ be ${{\mathcal L}{\widehat{C}}}$-subdifferentiable at a point $\bar{x} \in X$. Then $f_1+f_2$ is ${{\mathcal L}{\widehat{C}}}$-subdifferentiable at the point $\bar{x}$, and
$$D_{{\mathcal L}{\widehat{C}}_\theta}f_1(\bar{x}) + D_{{\mathcal L}{\widehat{C}}_\theta}f_2(\bar{x}) \subseteq D_{{\mathcal L}{\widehat{C}}_\theta}(f_1 + f_2)(\bar{x})
$$
$(ii)$ for any  $l_1 \in \partial_{{\mathcal L}{\widehat{C}}_\theta}f_1(\bar{x})$ and any $l_2 \in \partial_{{\mathcal L}{\widehat{C}}_\theta}f_2(\bar{x})$ there exists $l \in \partial_{{\mathcal L}{\widehat{C}}_\theta}(f_1 + f_2)(\bar{x})$ such that $l(x) \ge l_1(x) + l_2(x)$ for all $x \in X.$
\end{proposition}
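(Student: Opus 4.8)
The plan is to reduce both assertions to the defining inequality \eqref{e6.7} together with the order-preserving correspondence of Proposition \ref{pr6.2} and the enlargement result of Proposition \ref{pr5.1}. First I would take arbitrary $l_1 \in D_{{\mathcal L}{\widehat{C}}_\theta}f_1(\bar{x})$ and $l_2 \in D_{{\mathcal L}{\widehat{C}}_\theta}f_2(\bar{x})$; by \eqref{e6.7} this means $f_i(x)-f_i(\bar{x}) \ge l_i(x-\bar{x})$ for all $x \in X$ and $i=1,2$. (Note that the existence of such an $l_i$ already forces $f_i(x) > -\infty$ for all $x$, so $f_1+f_2$ is a well-defined ${\mathbb{R}}_{+\infty}$-valued function and $\bar{x} \in {\rm dom}(f_1+f_2)$ since $\bar{x} \in {\rm dom}\,f_1 \cap {\rm dom}\,f_2$.) Adding the two inequalities gives
$$(f_1+f_2)(x) - (f_1+f_2)(\bar{x}) \ge (l_1+l_2)(x-\bar{x}) \quad\text{for all }x \in X.$$
A sum of two real-valued Lipschitz continuous concave functions is again Lipschitz continuous and concave, and $(l_1+l_2)(\theta)=0$, so $l_1+l_2 \in {\mathcal L}{\widehat{C}}_\theta(X,{\mathbb{R}})$; hence $l_1+l_2 \in D_{{\mathcal L}{\widehat{C}}_\theta}(f_1+f_2)(\bar{x})$. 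Since by hypothesis $D_{{\mathcal L}{\widehat{C}}_\theta}f_i(\bar{x}) \ne \varnothing$, this shows at once that $D_{{\mathcal L}{\widehat{C}}_\theta}(f_1+f_2)(\bar{x}) \ne \varnothing$, i.e. that $f_1+f_2$ is ${\mathcal L}{\widehat{C}}$-subdifferentiable at $\bar{x}$ by Proposition \ref{pr6.2}, and it proves the claimed inclusion.

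For part $(ii)$, given $l_1 \in \partial_{{\mathcal L}{\widehat{C}}_\theta}f_1(\bar{x})$ and $l_2 \in \partial_{{\mathcal L}{\widehat{C}}_\theta}f_2(\bar{x})$, the previous paragraph gives $l_1+l_2 \in D_{{\mathcal L}{\widehat{C}}_\theta}(f_1+f_2)(\bar{x})$. I would then transport this to the support-minorant side via Proposition \ref{pr6.2}: the function $h:x\mapsto(l_1+l_2)(x-\bar{x})+(f_1+f_2)(\bar{x})$ belongs to $S^-({\mathcal L}{\widehat{C}},f_1+f_2,\bar{x})$. Proposition \ref{pr5.1} then produces $\bar{h}\in S^-_{\rm max}({\mathcal L}{\widehat{C}},f_1+f_2,\bar{x})$ with $h\le\bar{h}$; writing $\bar{h}(x)=l(x-\bar{x})+(f_1+f_2)(\bar{x})$, Proposition \ref{pr6.2} again gives $l\in\partial_{{\mathcal L}{\widehat{C}}_\theta}(f_1+f_2)(\bar{x})$, and $h\le\bar{h}$ is precisely $l(x)\ge l_1(x)+l_2(x)$ for all $x\in X$, as required.

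The only point requiring care — and the reason $(ii)$ is phrased as an inequality $l\ge l_1+l_2$ rather than an equality — is that $l_1+l_2$ need not itself be a maximal ${\mathcal L}{\widehat{C}}_\theta$-subgradient even when $l_1$ and $l_2$ are; this is the same failure of convexity of the set of maximal minorants already exhibited in Example \ref{ex3.3}. Hence the enlargement step through Proposition \ref{pr5.1} is genuinely needed, and it is really the only non-formal ingredient: everything else is bookkeeping with the order isomorphisms of Proposition \ref{pr6.2} and the elementary fact that ${\mathcal L}{\widehat{C}}_\theta(X,{\mathbb{R}})$ is closed under addition.
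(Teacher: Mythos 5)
Your proof is correct and is essentially the argument the paper intends: the paper gives no separate proof of Propositions \ref{pr8.1}--\ref{pr8.3}, stating that they ``follow immediately from their definitions and properties,'' and your route --- adding the defining inequalities \eqref{e6.7}, noting that ${\mathcal L}\widehat{C}_\theta$ is closed under addition, and then passing to a maximal supporting minorant via Propositions \ref{pr6.2} and \ref{pr5.1} to get $(ii)$ --- is precisely that routine argument written out in full. No gaps; your closing remark correctly identifies the enlargement step as the reason $(ii)$ is an inequality rather than an equality.
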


\begin{proposition}\label{pr8.3}
Let $f_1,f_2:X \mapsto {\overline{\mathbb{R}}}$ be ${{\mathcal L}{\widehat{C}}}$-subdifferentiable at a point $\bar{x} \in {\rm dom}\,f_1\cap{\rm dom}\,f_2$. If $f_1(x) \le f_2(x)$ for all $x \in X$ and $f_1(\bar{x}) = f_2(\bar{x})$, then for each $l_1 \in D_{{{\mathcal L}{\widehat{C}}}_\theta}f_1(\bar{x})$ there exists $l_2 \in \partial_{{{\mathcal L}{\widehat{C}}}_\theta}f_2(\bar{x})$ such that $l_1(x) \le l_2(x)$ for all $x \in X.$
\end{proposition}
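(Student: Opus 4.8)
The plan is to translate the statement about $\mathcal{L}\widehat{C}_\theta$-subgradients into the language of $\mathcal{L}\widehat{C}$-minorants via Proposition \ref{pr6.2}, and then to use the sandwich-type argument that lets one slip a \emph{maximal} Lipschitz concave minorant above a given one. First I would take an arbitrary $l_1 \in D_{\mathcal{L}\widehat{C}_\theta}f_1(\bar{x})$. By Proposition \ref{pr6.2} the function $h_1: x \mapsto l_1(x-\bar{x}) + f_1(\bar{x})$ belongs to $S^-(\mathcal{L}\widehat{C},f_1,\bar{x})$, that is, $h_1$ is a Lipschitz continuous concave minorant of $f_1$ with $h_1(\bar{x}) = f_1(\bar{x})$. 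Since $f_1 \le f_2$ pointwise, $h_1 \le f_1 \le f_2$, so $h_1 \in S^-(\mathcal{L}\widehat{C},f_2)$; and because $h_1(\bar{x}) = f_1(\bar{x}) = f_2(\bar{x})$, in fact $h_1 \in S^-(\mathcal{L}\widehat{C},f_2,\bar{x})$.

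Next I would invoke Proposition \ref{pr5.1}: since $S^-(\mathcal{L}\widehat{C},f_2,\bar{x})$ is nonempty (it contains $h_1$), for the element $h_1$ there exists $\bar{h}_2 \in S^-_{\mathrm{max}}(\mathcal{L}\widehat{C},f_2,\bar{x})$ with $h_1 \le \bar{h}_2$. In particular $\bar{h}_2(\bar{x}) = f_2(\bar{x})$ and $\bar{h}_2$ is a maximal Lipschitz concave minorant of $f_2$ supporting it at $\bar{x}$. Now define $l_2 : x \mapsto \bar{h}_2(x+\bar{x}) - \bar{h}_2(\bar{x}) = \bar{h}_2(x+\bar{x}) - f_2(\bar{x})$. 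Then $l_2 \in \mathcal{L}\widehat{C}_\theta(X,\mathbb{R})$, and by the second assertion of Proposition \ref{pr6.2} (the order-preserving one-to-one correspondence between $\partial_{\mathcal{L}\widehat{C}_\theta}f_2(\bar{x})$ and $S^-_{\mathrm{max}}(\mathcal{L}\widehat{C},f_2,\bar{x})$), the membership $\bar{h}_2 \in S^-_{\mathrm{max}}(\mathcal{L}\widehat{C},f_2,\bar{x})$ gives $l_2 \in \partial_{\mathcal{L}\widehat{C}_\theta}f_2(\bar{x})$.

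Finally I would unwind the inequality $h_1 \le \bar{h}_2$. For every $x \in X$ this reads $l_1(x-\bar{x}) + f_1(\bar{x}) \le \bar{h}_2(x)$; replacing $x$ by $x + \bar{x}$ and using $f_1(\bar{x}) = f_2(\bar{x})$ together with the definition of $l_2$ yields $l_1(x) + f_2(\bar{x}) \le \bar{h}_2(x+\bar{x}) = l_2(x) + f_2(\bar{x})$, i.e. $l_1(x) \le l_2(x)$ for all $x \in X$, as required. The only real point requiring care is the passage via Proposition \ref{pr5.1}, whose force is exactly that a maximal element \emph{dominating} $h_1$ exists and is still Lipschitz continuous; everything else is bookkeeping with the affine shift $x \mapsto x \pm \bar{x}$ that identifies $\mathcal{L}\widehat{C}_\theta$-subgradients at $\bar{x}$ with $\mathcal{L}\widehat{C}$-minorants supporting at $\bar{x}$. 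Note also that $\bar{x} \in \mathrm{dom}\,f_1 \cap \mathrm{dom}\,f_2$ guarantees $f_1(\bar{x}) = f_2(\bar{x})$ is a genuine finite real, so all the shifts are legitimate.
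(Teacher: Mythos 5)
Your proof is correct, and it is exactly the argument the paper has in mind: the paper gives no explicit proof for Proposition \ref{pr8.3}, asserting only that it ``follows immediately from the definitions and properties,'' and your route --- translating $l_1$ into a supporting minorant $h_1 \in S^-({\mathcal L}\widehat{C},f_2,\bar{x})$ via Proposition \ref{pr6.2}, majorizing it by a maximal supporting minorant via Proposition \ref{pr5.1}, and translating back --- is precisely the intended use of those properties. Your observations that $f_2$ is proper (so Proposition \ref{pr5.1} applies) and that $f_1(\bar{x})=f_2(\bar{x})$ is finite are the right points of care, and the bookkeeping with the shift $x \mapsto x \pm \bar{x}$ is handled correctly.
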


Using symmetry between ${{\mathcal L}{\widehat{C}}_\theta}$-subdifferentials and ${{\mathcal L}{\widecheck{C}}_\theta}$-superdifferentials, similar calculus rules can be formulated for ${{\mathcal L}{\widecheck{C}}_\theta}$-superdifferentials.

\begin{theorem}[\rm{{global extremum criterium}}]\hspace{-6pt}\hspace{3pt}

$(i)$ If a function $f:X \mapsto \overline{\mathbb{R}}$ is ${{{\mathcal L}{\widehat{C}}}}$-subdifferentiable at a point $\bar{x} \in {\rm dom}\,f$,
then  $f$ achieves its global minimum over $X$ at the point $\bar{x} \in {\rm dom}\,f$ if and only if
$$
 0_{X^*} \in \partial_{{{\mathcal L}{\widehat{C}}_\theta}}f(\bar{x}).
$$
Here $\partial_{{{\mathcal L}{\widehat{C}}_\theta}}f(\bar{x})$ stands for the thin ${{{\mathcal L}{\widehat{C}}_\theta}}$-subdifferential of $f$ at $\bar{x}$, and $0_{X^*}$ is the null linear function defined on $X$ $($the null element of $X^*$$)$.

$(ii)$ If a function $f:X \mapsto \overline{\mathbb{R}}$ is ${{{\mathcal L}{\widecheck{C}}_\theta}}$-superdifferentiable at a point $\bar{x} \in {\rm dom}\,f$,
then  $f$ achieves its global maximum over $X$ at the point $\bar{x} \in {\rm dom}\,f$ if and only if
$$
 0_{X^*} \in \partial^+_{{{\mathcal L}{\widecheck{C}}_\theta}}f(\bar{x}).
$$
Here $\partial^+_{{{\mathcal L}{\widecheck{C}}_\theta}}f(\bar{x})$ is the thin ${{{\mathcal L}{\widecheck{C}}_\theta}}$-superdifferential of $f$ at $\bar{x}$.
\end{theorem}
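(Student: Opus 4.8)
The plan is to reduce the statement to the defining inequality \eqref{e6.7} of the ${{\mathcal L}\widehat{C}}_\theta$-subdifferential together with one elementary fact about concave functions that vanish at the origin. First I would note that the null functional $0_{X^*}$ is a legitimate element of ${{\mathcal L}\widehat{C}}_\theta(X,{\mathbb{R}})$: it is linear, hence concave; it is $0$-Lipschitz; and it vanishes at $\theta$. So it does make sense to ask whether $0_{X^*}$ is an ${{\mathcal L}\widehat{C}}_\theta$-subgradient, or a maximal one, of $f$ at $\bar{x}$.

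For $(i)$, the key observation is that, by \eqref{e6.7}, the membership $0_{X^*}\in D_{{{\mathcal L}\widehat{C}}_\theta}f(\bar{x})$ means exactly $f(x)-f(\bar{x})\ge 0_{X^*}(x-\bar{x})=0$ for all $x\in X$, i.e.\ that $f$ attains its global minimum over $X$ at $\bar{x}$. (In particular this also shows that a global minimizer $\bar{x}\in{\rm dom}\,f$ is automatically a point of ${{\mathcal L}\widehat{C}}$-subdifferentiability, via the constant minorant $h\equiv f(\bar{x})$, so the standing hypothesis costs nothing in the ``only if'' direction.) It then remains to pass from $D_{{{\mathcal L}\widehat{C}}_\theta}f(\bar{x})$ to the thin subdifferential $\partial_{{{\mathcal L}\widehat{C}}_\theta}f(\bar{x})$, i.e.\ to show that when $\bar{x}$ is a global minimizer the functional $0_{X^*}$ is a \emph{maximal} element of $D_{{{\mathcal L}\widehat{C}}_\theta}f(\bar{x})$. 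For this I would use the following: if $l\in{{\mathcal L}\widehat{C}}_\theta(X,{\mathbb{R}})$ satisfies $l\ge 0_{X^*}$ pointwise, then $l\equiv 0$. Indeed, concavity and $l(\theta)=0$ give $0=l(\theta)\ge \frac{1}{2}l(y)+\frac{1}{2}l(-y)$ for every $y\in X$, while $l(y)\ge 0$ and $l(-y)\ge 0$; hence $l(y)=0$ for all $y$. Thus no ${{\mathcal L}\widehat{C}}_\theta$-subgradient strictly dominates $0_{X^*}$, so $0_{X^*}\in\partial_{{{\mathcal L}\widehat{C}}_\theta}f(\bar{x})$. Chaining the equivalences: $0_{X^*}\in\partial_{{{\mathcal L}\widehat{C}}_\theta}f(\bar{x})\Rightarrow 0_{X^*}\in D_{{{\mathcal L}\widehat{C}}_\theta}f(\bar{x})\Rightarrow \bar{x}$ is a global minimizer; conversely, a global minimizer $\bar{x}$ gives $0_{X^*}\in D_{{{\mathcal L}\widehat{C}}_\theta}f(\bar{x})$, which by the maximality just proved lies in $\partial_{{{\mathcal L}\widehat{C}}_\theta}f(\bar{x})$.

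Part $(ii)$ I would deduce from $(i)$ by the symmetry recorded in Section 5: $f$ attains its global maximum over $X$ at $\bar{x}$ iff $-f$ attains its global minimum there; $f$ is ${{\mathcal L}\widecheck{C}}$-superdifferentiable at $\bar{x}$ iff $-f$ is ${{\mathcal L}\widehat{C}}$-subdifferentiable at $\bar{x}$; and the identity $\partial^+_{{{\mathcal L}\widecheck{C}}_\theta}f(\bar{x})=-\partial_{{{\mathcal L}\widehat{C}}_\theta}(-f)(\bar{x})$ together with $0_{X^*}=-0_{X^*}$ turns the condition $0_{X^*}\in\partial_{{{\mathcal L}\widehat{C}}_\theta}(-f)(\bar{x})$ into $0_{X^*}\in\partial^+_{{{\mathcal L}\widecheck{C}}_\theta}f(\bar{x})$. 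Applying $(i)$ to $-f$ then yields $(ii)$ verbatim.

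There is no genuinely hard step here; the only non-bookkeeping ingredient is the one-line lemma that a nonnegative concave function on $X$ vanishing at $\theta$ is identically zero. The point that needs care is the bookkeeping distinction between $D_{{{\mathcal L}\widehat{C}}_\theta}f(\bar{x})$ (all subgradients) and $\partial_{{{\mathcal L}\widehat{C}}_\theta}f(\bar{x})$ (the maximal ones): it is precisely the maximality of $0_{X^*}$ that makes the criterion sharp, and establishing it — rather than merely the membership $0_{X^*}\in D_{{{\mathcal L}\widehat{C}}_\theta}f(\bar{x})$ — is where the argument carries any content.
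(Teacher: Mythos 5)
Your proof is correct and follows essentially the same route as the paper's one-line argument: both reduce the criterion to the defining inequality \eqref{e6.7} together with the fact that the null functional $0_{X^*}$ belongs to ${{\mathcal L}\widehat{C}}_\theta(X,{\mathbb{R}})$, with part $(ii)$ obtained by the symmetry between ${{\mathcal L}\widehat{C}}_\theta$-subdifferentials and ${{\mathcal L}\widecheck{C}}_\theta$-superdifferentials recorded in Section 5. The one step you make explicit that the paper's proof leaves implicit is the maximality of $0_{X^*}$ --- your observation that any $l\in{{\mathcal L}\widehat{C}}_\theta$ with $l\ge 0$ must vanish identically (by concavity and $l(\theta)=0$), so that a global minimizer yields membership in the thin subdifferential $\partial_{{{\mathcal L}\widehat{C}}_\theta}f(\bar{x})$ and not merely in $D_{{{\mathcal L}\widehat{C}}_\theta}f(\bar{x})$ --- which is exactly the point where the stated criterion carries content.
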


\begin{proof}
The statement $(i)$ immediates from the definitions of global minimum and from the inequality \eqref{e6.7} which characterizes ${{{\mathcal L}{\widehat{C}}_\theta}}$-subgradients of $f$ at $\bar{x}$ as well as from that the null linear function belongs to ${{\mathcal L}\widehat{C}}_\theta(X,{\mathbb{R}})$.

The proof of $(ii)$ is similar.
\end{proof}

\begin{theorem}[\rm{necessary condition of global extremum}]\hspace{-6pt}\hspace{3pt}

$(i)$ Assume that a function $f:X \mapsto \overline{\mathbb{R}}$ is ${\mathcal{L}}\widehat{C}$-subdifferentiable at a point $\bar{x} \in {\rm dom}\,f$.  If the function $f$ attains its global maximum over $X$ at the point $\bar{x}$, then
$$
0_{X^*} \in \partial^+ h(\bar{x})\,\,\text{for all}\,\,h \in S^{-}({\mathcal{L}}\widehat{C},f,\bar{x}).
$$
Here $S^{-}({\mathcal{L}}\widehat{C},f,\bar{x})$ is the set of ${\mathcal{L}}\widehat{C}$- minorants, supporting $f$ from below at $\bar{x}$, $\partial^+ h(\bar{x}):= \{x^* \in X^* \mid x^*(x-\bar{x}) \ge h(x) - h(\bar{x})\,\,\forall\,\,x \in X\}$ is the classical Moreau-Rockafellar superdifferential of a concave function $h$ at a point $\bar{x},$ and $0_X^*$ is the null linear functional defined on $X$.

$(ii)$ Let a function $f:X \mapsto \overline{\mathbb{R}}$ be ${\mathcal{L}}\widecheck{C}$-superdifferentiable at a point $\bar{x} \in {\rm dom}\,f$. If the function $f$ attains its global maximum over $X$ at the point $\bar{x}$, then
$$
0_{X^*} \in \partial g(\bar{x})\,\,\text{for all}\,\,g \in S^{+}({\mathcal{L}}\widecheck{C},f,\bar{x}).
$$
Here $S^{+}({\mathcal{L}}\widecheck{C},f,\bar{x})$ is the set of ${\mathcal{L}}\widehat{C}$- majorants, supporting $f$ from above at $\bar{x}$, $\partial g(\bar{x}):= \{x^* \in X^* \mid x^*(x-\bar{x}) \le g(x) - g(\bar{x})\,\,\forall\,\,x \in X\}$ is the classical Moreau-Rockafellar subdifferential of a convex function $g$ at a point $\bar{x}.$

\end{theorem}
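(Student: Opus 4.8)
The plan is to prove both statements directly from the definitions, using the fact that each $h \in S^{-}({\mathcal{L}}\widehat{C},f,\bar{x})$ is a concave Lipschitz minorant of $f$ that touches $f$ at $\bar{x}$ (and symmetrically in part $(ii)$). First I would treat part $(i)$. Suppose $f$ attains its global maximum over $X$ at $\bar{x}$, so that $f(x) \le f(\bar{x})$ for all $x \in X$. Fix any $h \in S^{-}({\mathcal{L}}\widehat{C},f,\bar{x})$; then $h \le f$ on $X$ and $h(\bar{x}) = f(\bar{x})$. Combining these, for every $x \in X$ we get $h(x) \le f(x) \le f(\bar{x}) = h(\bar{x})$, i.e. $h(x) - h(\bar{x}) \le 0 = 0_{X^*}(x - \bar{x})$. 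By the definition of the Moreau--Rockafellar superdifferential of the concave function $h$, this is exactly the statement $0_{X^*} \in \partial^{+} h(\bar{x})$. Since $h$ was arbitrary in $S^{-}({\mathcal{L}}\widehat{C},f,\bar{x})$, the conclusion follows. Note that ${\mathcal{L}}\widehat{C}$-subdifferentiability of $f$ at $\bar{x}$ guarantees that $S^{-}({\mathcal{L}}\widehat{C},f,\bar{x})$ is nonempty, so the statement is not vacuous; but the argument itself does not even need that hypothesis.

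For part $(ii)$ I would argue symmetrically. Again assume $f$ attains its global maximum over $X$ at $\bar{x}$, so $f(x) \le f(\bar{x})$ for all $x$. Fix any $g \in S^{+}({\mathcal{L}}\widecheck{C},f,\bar{x})$; then $g \ge f$ on $X$ and $g(\bar{x}) = f(\bar{x})$. Hence for every $x \in X$, $g(x) \ge f(x)$, which by itself only gives a lower bound; the touching condition is what does the work: $g(\bar{x}) = f(\bar{x})$ is the minimum value of $f$, so I need $g(x) - g(\bar{x}) \ge 0$, i.e. $g$ attains its minimum over $X$ at $\bar{x}$ as well. This follows because $g(x) \ge f(x)$ would not suffice on its own --- here I should instead use that $g \ge f$ and $f \le f(\bar{x}) = g(\bar{x})$ give $g(x) \ge f(x)$, and separately one uses $g(\bar x) = f(\bar x) \le f(x) \le g(x)$ only if $\bar x$ were a minimum of $f$, which it is by hypothesis. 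Wait --- more carefully: $f(\bar x)$ is the \emph{maximum} of $f$, so $f(x) \le f(\bar x)$, giving $g(x) \ge f(x)$ but no comparison with $g(\bar x)$. The correct deduction is: for all $x$, $g(x) \ge f(x)$; we do not directly get $g(x) \ge g(\bar x)$ from this. So in fact one must argue that since $g$ is a ${\mathcal{L}}\widecheck{C}$-majorant touching $f$ at $\bar x$ and $\bar x$ is a global max of $f$, the convex function $g$ satisfies $g(x)\ge f(x)$ and $g(\bar x)=f(\bar x)$; then $x\mapsto g(x)-g(\bar x)$ is a convex Lipschitz function vanishing at $\bar x$, and $0_{X^*}\in\partial g(\bar x)$ is equivalent to $g$ attaining its global minimum at $\bar x$, which need not hold in general. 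Hence the honest route for $(ii)$ is to observe $g(x) \ge f(x)$ for all $x$ while $g(\bar x) = f(\bar x)$, so $g(x) - g(\bar x) \ge f(x) - f(\bar x)$; this does not immediately give the claim, so the cleanest proof passes through $-f$: apply part $(i)$ to $-f$, noting $g \in S^{+}({\mathcal{L}}\widecheck{C},f,\bar{x})$ corresponds to $-g \in S^{-}({\mathcal{L}}\widehat{C},-f,\bar{x})$ (since ${\mathcal{L}}\widecheck{C} = -{\mathcal{L}}\widehat{C}$), that $-f$ attains its global minimum at $\bar x$, invoke the already-proven global extremum criterion or part $(i)$ suitably, and translate $0_{X^*} \in \partial^{+}(-g)(\bar{x})$ into $0_{X^*} \in \partial g(\bar{x})$.

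The main obstacle --- as the tangle above shows --- is getting the direction of the inequalities and the sub/super correspondence exactly right, since part $(ii)$ concerns majorants but still a global \emph{maximum}, so one cannot simply mimic part $(i)$ verbatim; the symmetry must be routed through negation, $f \mapsto -f$ and ${\mathcal{L}}\widecheck{C} \mapsto {\mathcal{L}}\widehat{C}$, using Remark \ref{r1} and the symmetry relations $D^{+}_{{{\mathcal{L}}\widecheck{C}}_\theta}f(\bar{x}) = -D_{{{\mathcal{L}}\widehat{C}}_\theta}(-f)(\bar{x})$ established in Section 5. Everything else is a one-line manipulation of the defining inequalities of $S^{-}$, $S^{+}$, $\partial^{+}h$, and $\partial g$. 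I would therefore write $(i)$ out in full as above, then dispatch $(ii)$ in two sentences by the $-f$ substitution, remarking that the hypothesis of ${\mathcal{L}}\widecheck{C}$-superdifferentiability of $f$ at $\bar x$ (equivalently ${\mathcal{L}}\widehat{C}$-subdifferentiability of $-f$ at $\bar x$) ensures the relevant support set is nonempty.
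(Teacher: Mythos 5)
Your proof of part $(i)$ is correct and is essentially the paper's own argument: for every $h\in S^-({\mathcal L}\widehat{C},f,\bar x)$ one has $h(x)\le f(x)\le f(\bar x)=h(\bar x)$ for all $x\in X$, so $h$ attains its global maximum over $X$ at $\bar x$, and for a concave function this is exactly the condition $0_{X^*}\in\partial^+h(\bar x)$.

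Part $(ii)$ is where there is a genuine gap, and your own tangle points at the real issue without resolving it. Read literally (with ``global maximum'' in the hypothesis), statement $(ii)$ is false: take $X=\mathbb{R}$, $f(x)=-|x|$, $\bar x=0$, $g(x)=x$; then $g$ is a Lipschitz convex majorant of $f$ with $g(0)=f(0)$ and $f$ attains its global maximum at $0$, yet $\partial g(0)=\{1\}$ does not contain $0_{X^*}$. Your proposed repair does not close this: applying part $(i)$ to $-f$ requires $-f$ to attain a global \emph{maximum} at $\bar x$, i.e.\ $f$ a global minimum, which is not the stated hypothesis; and the global extremum criterion applied to $-f$ only yields $0_{X^*}\in\partial_{{\mathcal L}\widehat{C}_\theta}(-f)(\bar x)$, which says nothing about an arbitrary supporting majorant $g$. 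The intended statement $(ii)$ --- the exact $f\mapsto -f$ mirror of $(i)$, and the only reading compatible with the conclusion $0_{X^*}\in\partial g(\bar x)$ for the convex majorants --- has ``global minimum'' in the hypothesis (the printed ``maximum'' is a slip in the paper); under that reading the proof is the same two lines as in $(i)$: $g(x)\ge f(x)\ge f(\bar x)=g(\bar x)$ for all $x\in X$, so the convex function $g$ attains its global minimum at $\bar x$, i.e.\ $0_{X^*}\in\partial g(\bar x)$, which is what the paper's ``by similar arguments'' amounts to (alternatively, the $-f$ reduction to part $(i)$ then works verbatim). You spotted the inconsistency, which is to your credit, but you neither corrected the hypothesis nor produced a proof of $(ii)$ that works; as written, your reduction through $-f$ fails.
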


\begin{proof}
$(i)$ If the function $f$ attains its global maximum over $X$ at the point $\bar{x}$, then each function $h \in S^{-}({\mathcal{L}}\widehat{C},f,\bar{x})$ attains its global maximum at the point $\bar{x}$ as well. Since functions $h \in S^{-}({\mathcal{L}}\widehat{C},f,\bar{x})$ is concave in conventional sense, this is equivalent the condition $0_{X^*} \in \partial^+ h(\bar{x})$.

By similar arguments we come to $(ii)$.
\end{proof}

\section{Conclusion}

In conclusion, briefly about some directions for future research.

(1) It is of interest to study the behavior of ${\mathcal{L}}\widehat{C}$-subdifferential $D^-_{{{\mathcal L}{\widehat{C}}_\theta}}f(\bar{x})$ and that of the thin ${{{\mathcal L}{\widehat{C}}_\theta}}$-subdifferential  $\partial^+_{{{\mathcal L}{\widecheck{C}}_\theta}}f(\bar{x})$ of a function $f$ at $\bar{x} \in {\rm dom}f$ under varying $\bar{x}$.

\smallskip

(2) The concept of ${{{\mathcal L}{\widehat{C}}}}$-subdifferentiability of a function $f$ at a point $x$, introduced in this article, is global, since it requires the existence of a global minorant of the function $f$ which supports from below $f$ at a point $x$. It would be reasonable to localize this notion, that would extend the class of ${{{\mathcal L}{\widehat{C}}}}$-subdifferentiable functions.

\smallskip

(3) For a real $\varepsilon > 0$ the concepts of $\varepsilon$-${\mathcal{L}}\widehat{C}$-subdifferential and  the thin $\varepsilon$-${\mathcal{L}}\widehat{C}$-subdifferential can also be introduced and explored.

\section*{Acknowledgement}

The author thanks the referees for the careful reading of the manuscript and their constructive comments and suggestions.

\section*{Disclosure statement}

There are no conflicts of interest to disclose.

\section*{Funding}

The research was supported by the State Program for Fundumental Research of Republic of Belarus 'Convergence-2025'.

\end{document}